\newtheorem{theorem}{Theorem}[section]
\newtheorem{lemma}[theorem]{Lemma}
\newtheorem{prop}[theorem]{Proposition}
\theoremstyle{definition}
\newtheorem{definition}[theorem]{Definition}
\theoremstyle{remark}
\newtheorem{remark}[theorem]{Remark}
\numberwithin{equation}{section}
\newcommand{\R}{\mathbb{R}}
\newcommand{\N}{\mathbb{N}}
\newcommand{\Z}{\mathbb{Z}}
\newcommand{\vers}{\rightarrow}
\begin{document}

\title{COUNTEREXAMPLES TO RUELLE'S INEQUALITY IN THE NONCOMPACT CASE}

\author{Felipe Riquelme}
\address{IRMAR-UMR 6625 CNRS,
Universit\'e de Rennes 1, Rennes 35042}
\email{felipe.riquelme@univ-rennes1.fr}
\thanks{The author wish to thank Pierre Arnoux, Vincent Delecroix, S\'ebastien Gou\"ezel, Godofredo Iommi, Fran\c{c}ois Ledrappier, Barbara Schapira and Juan Souto for many useful discussions and commentaries that improved the paper.}

\subjclass[2010]{37A05, 37A35, 37C05, 37C10, 37C40}

\date{\today}


\keywords{ergodic theory, Riemannian geometry, smooth dynamical systems, Lyapunov exponents, Ruelle's inequality}

\begin{abstract} In this paper we show that there exists a large family of smooth dynamical systems defined over noncompact spaces that does not satisfy Ruelle's inequality between entropy and Lyapunov exponents.
\end{abstract}

\maketitle

\section{Introduction}

Let $f:M\vers M$ be a $C^{1}$-diffeomorphism of a Riemannian manifold and let $\mu$ be an $f$-invariant probability measure on $M$. The measure-theoretic entropy of $f$ with respect to $\mu$, denoted by $h_{\mu}(f)$, is an ergodic invariant measuring the exponential growth rate of the complexity of the dynamics from the point of view of $\mu$. The set of (Lyapunov-Perron) regular points in $M$, denoted by $\Lambda$, is the set of points where the asymptotic eigenvalues of the linearized dynamics of $df$ are (somehow) well defined. These eigenvalues are called Lyapunov exponents of $f$ (see definition in section 2.2). When $M$ is compact the set $\Lambda$ has full measure (see \cite{MR0240280}). Moreover, Ruelle's inequality (see \cite{MR516310}) says that the measure-theoretic entropy of $f$ with respect to $\mu$, denoted by $h_{\mu}(f)$, satisfies
$$h_{\mu}(f)\leq \int \chi^{+}(x)d\mu(x),$$
where $\chi^{+}(x)$ is the sum of the positive Lyapunov exponents at $x\in \Lambda$.\\

When $M$ is a noncompact Riemannian manifold, no general such statement is known. In \cite{MR872698} the authors proved that Ruelle's inequality holds for diffeomorphisms with singularities on compact manifolds under some technical assumptions. Note that these diffeomorphisms can be understood like diffeomorphisms defined on noncompact manifolds. In any case, the compacity of the underlying manifold seems to be a crucial hypothesis.\\

The aim of this paper is to show that there exist diffeomorphisms on noncompact manifolds for which Ruelle's inequality is no longer satisfied. More precisely, we have
\newpage
\begin{theorem}\label{T1} For all $h\in (0,\infty]$ there exists a noncompact Riemannian manifold $M$, a $C^{\infty}$-diffeomorphism $f:M\vers M$ and a $f$-invariant probability measure $\mu$ over $M$, whose measure-theoretic entropy satisfies $h_{\mu}(f)=h$ and such that $\mu$-almost everywhere, the Lyapunov exponents are equal to zero. In other words, we have
$$0=\int \chi^{+} d\mu < h_{\mu}(f)\leq \infty.$$
\end{theorem}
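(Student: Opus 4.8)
The plan is to decouple two quantities that Ruelle's inequality ties together only because of compactness. The measure-theoretic entropy $h_\mu(f)$ is an invariant of the measure-preserving isomorphism class of $(f,\mu)$ and does not see the Riemannian metric at all, whereas the Lyapunov exponents are defined through the growth of $\|D_xf^n v\|$ and depend on $Df$ along orbits. On a compact manifold these are linked because $\|Df\|$ and the higher-order distortion of $f$ are uniformly bounded, so a single $\epsilon$-ball can spread over at most $\prod_i\max(1,\sigma_i(D_xf))$ balls of the same size; summing along orbits and applying Kingman's theorem yields $h_\mu(f)\le\int\chi^+\,d\mu$. Noncompactness breaks exactly this step: near infinity one may let the distortion (and $\|Df\|$ itself) blow up, so that the branch-separation producing entropy is carried by the nonlinear part of $f$ while the asymptotic linear stretching stays null. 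I would therefore reduce the theorem to two independent tasks: (a) realize a prescribed-entropy abstract system as a $C^\infty$ diffeomorphism $f$ of a noncompact $M$ with an invariant probability $\mu$; and (b) arrange the derivative cocycle $x\mapsto D_xf$ to have all Lyapunov exponents equal to $0$.

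For (a) I would fix the entropy symbolically. For $h\in(0,\infty)$ take a Bernoulli scheme on a finite or countable alphabet with probability vector $(p_i)$ chosen so that $-\sum_i p_i\log p_i=h$; for $h=\infty$ take a countable alphabet with slowly decaying $(p_i)$ so that the series diverges. The manifold $M$ is built from a sequence of ``coding cells'' that march off to infinity, and $f$ is modeled so that its action realizes the shift on this scheme; by the Kolmogorov--Sinai theorem (and the Abramov formula, if a tower is used) the resulting $h_\mu(f)$ equals the entropy of the scheme, namely $h$. The essential geometric point is that the separation of symbols is produced by a strong, high-frequency folding of $f$ supported in regions where the chosen complete metric is metrically ``vast'': a map whose linear part has controlled norm can still spread one small ball across many pairwise-disjoint coding cells, precisely because those cells are metrically tiny near infinity. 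This is what lets entropy be positive without linear expansion, and it is impossible on a compact manifold.

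For (b) I would engineer the local models so that, along $\mu$-a.e.\ orbit, $\log\|D_xf^n\|$ behaves like a Birkhoff sum of a zero-mean function (equivalently, so that the derivative cocycle is measurably conjugate into a compact group, or is an additive cocycle with zero drift). Then the top exponent is $\lim_n\frac1n\log\|D_xf^n\|=0$ a.e.\ by Birkhoff's theorem, and applying the same reasoning to $Df^{-1}$ forces every Oseledets exponent to vanish, so that $\int\chi^+\,d\mu=0$. The remaining routine verifications are that $\mu$ is genuinely a probability measure --- which I would guarantee by taking summable weights on the cells and ensuring the excursions to infinity are (positively) recurrent, so the natural invariant measure normalizes --- and that the cell-to-cell transition maps, glued by fixed bump functions, assemble into a global $C^\infty$ diffeomorphism.

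The hard part is the simultaneous reconciliation of (a) and (b). Prescribing entropy $h>0$ forces $f$ to be genuinely branching and non-equicontinuous, hence to have unbounded $\|Df\|$ near infinity; yet (b) demands that this expansion cancel along almost every orbit so that the exponents are exactly $0$ rather than merely small. The delicate construction is thus to locate all branching ``at infinity'', to decouple it from the linear stretching by exploiting unbounded higher-order distortion in a metric that keeps the accumulated $\log\|Df^n\|$ sublinear, and to do so while retaining both a finite invariant measure and global smoothness. Verifying that the exponents are precisely zero --- through Birkhoff and Kingman applied to the engineered cocycle --- is where I expect essentially all of the difficulty to lie.
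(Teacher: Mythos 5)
Your proposal correctly identifies the philosophy of the counterexample --- indeed the very one the paper uses: produce the entropy ``at infinity'' by an essentially symbolic mechanism while keeping the linearized dynamics null --- but as written it defers precisely the steps in which all the mathematical content lies, and the specific mechanisms you sketch for those steps would not work as stated. For your task (a), ``coding cells marching off to infinity'' acted on by a ``strong, high-frequency folding'' is a restatement of the goal, not a construction. The paper's device is concrete: by the Arnoux--Ornstein--Weiss theorem, any aperiodic system of entropy $h$ is measurably conjugate to a \emph{countable interval exchange transformation} $T$ of $[0,1)$, which is a piecewise translation, hence smooth off the countable set of discontinuities; one then suspends $T$ under a roof function $r$ that tends to $+\infty$ at every discontinuity point and equals $1$ on most of each interval (so $\int r\,dm<\infty$). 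Pushing the discontinuities to infinity is exactly what makes the quotient $I\times\R/\!\sim$ a smooth surface and the time-one map $\phi$ of the suspension flow a $C^{\infty}$ diffeomorphism; Abramov's formula plus a time rescaling, $f=\phi^{2\int r\,dm}$, then gives $h_{\mu}(f)=h$ exactly. Without a device of this kind, your item (a) remains the problem itself.

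For task (b) the proposed mechanisms are flawed. If $\log\|D_{x}f^{n}\|$ were genuinely a Birkhoff sum of an integrable zero-mean function, the norm would have to be exactly multiplicative along orbits (essentially a conformality requirement), a rigidity nothing in your outline provides; and in any gluing-at-infinity construction there is no integrable generator at all: the derivative of $\phi$ is a unipotent shear with off-diagonal entry $-2r'(x)$, and $|r'|$ blows up \emph{non-integrably} near the discontinuities --- only $\log^{+}(2+2|r'|)$ is integrable, so neither a ``zero mean'' nor a ``zero drift'' makes sense. The mechanism that actually works is different: because the shears are unipotent, their off-diagonal entries \emph{add} along orbits, so $\|d_{z}\phi^{n}\|$ is bounded by the Birkhoff sum $\sum_{i=0}^{n-1}h(T^{i}x)$ with $h=2+2|r'|$, and one concludes by Aaronson's lemma --- if $\log^{+}|h|\in L^{1}(m)$, then $\frac{1}{n}\log^{+}\bigl|\sum_{i=0}^{n-1}h\circ T^{i}\bigr|\to 0$ almost everywhere --- rather than by Birkhoff's theorem. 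Moreover, for the exponents to be defined $\mu$-a.e.\ at all, one must verify the Oseledec hypotheses $\int\log^{+}\|d\phi^{\pm 1}\|\,d\mu<\infty$; in the paper this is what forces the quantitative choice of the flattening parameters of the roof, $-\sum_{i}b_{i}\log b_{i}<\infty$, a constraint your outline never produces. Since you yourself locate ``essentially all of the difficulty'' in exactly these verifications, the proposal has a genuine gap: it is the right strategy, but it supplies neither the construction nor the analytic mechanism that makes the strategy succeed.
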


The key idea behind this theorem is the following. We will construct dynamical systems that look like suspended flows over countable interval exchange transformations, so that the local behavior is that of a translation, whereas the entropy comes from infinity.

\begin{remark} The Riemannian metric $g$ of our construction is not complete. We know that we can always find a complete Riemannian metric conformal to $g$ (see \cite[Theorem 1]{MR0133785}), but we cannot ensure that the Lyapunov exponents associated to this one remain equal to zero.
\end{remark}

In section 2 we give some background on entropy, Lyapunov exponents and countable interval exchange transformations. Section 3 is devoted to the construction of the manifold $M$, the diffeomorphism $f$ and the measure $\mu$, that are used to prove Theorem \ref{T1} in section 4.

\section{Preliminaries}

\subsection{Measure-theoretic entropy} Let $(X,\mu)$ be a probability space and $T:X\vers X$ a measurable transformation. Let $\mathcal{P}$ be a finite measurable partition of $X$. The entropy of $\mathcal{P}$ with respect to $\mu$, denoted by $H_{\mu}(\mathcal{P})$, is defined as
$$H_{\mu}(\mathcal{P})=-\sum_{P\in\mathcal{P}}\mu(P)\log\mu(P).$$
For all $n\geq 0$ define the partition $\mathcal{P}^{n}$ as the measurable partition consisting of all possible intersections of elements of $T^{-i}\mathcal{P}$, for all $i=0,...,n-1$. The entropy of $T$ with respect to the partition $\mathcal{P}$ is then defined as the limit
$$h_{\mu}(T,\mathcal{P})=\lim_{n\vers\infty} \frac{1}{n}H_{\mu}(\mathcal{P}^{n}).$$
The \textit{measure-theoretic entropy of} $T$, with respect to $\mu$, is the supremum of the entropies $h_{\mu}(T,\mathcal{P})$ over all measurable finite partitions $\mathcal{P}$ of $X$, i.e.
$$h_{\mu}(T)=\sup_{\mathcal{P} \text{ finite}} h_{\mu}(T,\mathcal{P}).$$
We recall that the measure-theoretic entropy is invariant under (measure) conjugation (see for instance \cite[Theorem 4.11]{MR648108}).

\subsection{Lyapunov exponents} In the study of a smooth dynamical system it is natural to linearize the dynamics, and the notion of Lyapunov exponents is particularly relevant. Let $(M,g)$ be a Riemannian manifold and $f:M\vers M$ a $C^{1}$-map. For $x\in M$, let $\|\cdot\|_{x}$ denote the Riemannian norm induced by $g$ on $T_{x}M$. The point $x$ is said to be (Lyapunov-Perron) \textit{regular} if there exist numbers $\{\lambda_{i}(x)\}_{i=1}^{s(x)}$, called \textit{Lyapunov exponents}, and a decomposition of the tangent space at $x$ into $T_{x}M=\bigoplus_{i=1}^{s(x)}E_{i}(x)$ such that for every tangent vector $v\in E_{i}(x)\setminus\{0\}$, we have
$$\lim_{n\vers\pm\infty}\frac{1}{n}\log \|d_{x}f^{n}v\|_{f^{n}x}=\lambda_{i}(x).$$
Let $\Lambda$ be the set of regular points. If $x\in\Lambda$ and $\lambda_{i}(x)$ is a positive Lyapunov exponent, locally the action of $d_{x}f$ in the direction of $E_{i}(x)$ is expanding. On the other hand, if $\lambda_{i}(x)$ is a negative Lyapunov exponent, locally the action of $d_{x}f$ in the direction of $E_{i}(x)$ is contracting.

By a theorem of Oseledec (\cite{MR0240280},\cite{MR876081}), if $\mu$ is an $f$-invariant probability measure on $M$ such that $\log^{+}\|df\|$ and $\log^{+}\|df^{-1}\|$ are $\mu$-integrable, the set $\Lambda$ is a set of $\mu$-full measure. When $M$ is compact and $f$ is $C^{1}$ these assumptions are always satisfied. In particular, by Oseledec's Theorem, the set $\Lambda$ is a set of full measure for all $f$-invariant probability measures over $M$.

\begin{theorem}[Ruelle] Let $M$ be a compact Riemannian manifold and $f:M\vers M$ a $C^{1}$-diffeomorphism. Then, for every probability $f$-invariant measure $\mu$ on $M$, we have
$$h_{\mu}(f)\leq \int \chi^{+}(x)d\mu(x).$$
\end{theorem}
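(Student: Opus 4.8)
The plan is to prove Ruelle's inequality by carefully comparing the entropy, computed via a sequence of refining partitions, with the local expansion rate of the derivative cocycle. The guiding principle is that entropy measures how fast $f$ separates orbits, and since $f$ is $C^1$ on a compact manifold, the rate at which nearby points separate is controlled by the singular values of $d_xf^n$, whose logarithmic growth rates are governed by the positive Lyapunov exponents. I would work with $f^n$ for a fixed large $n$ and then divide by $n$ at the end, exploiting that $h_\mu(f^n)=n\,h_\mu(f)$ and that the integral of $\chi^+$ scales the same way, so it suffices to get a good bound for a single iterate.

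First I would fix a fine grid: using compactness, cover $M$ by a finite atlas and choose a partition $\mathcal{P}$ into small pieces (say of diameter at most $\varepsilon$), so that $h_\mu(f)=\lim_m \frac1m H_\mu(\mathcal{P}^m)$ is well approximated, and by the Kolmogorov--Sinai theorem a single sufficiently fine generating-type partition captures the entropy. The central estimate is geometric: I would count, for each atom $P$, how many atoms of $\mathcal P$ the image $f^n(P)$ can meet. Since $f^n$ distorts a small ball of radius $\varepsilon$ into an approximate ellipsoid whose semi-axes are the singular values $\sigma_1(d_xf^n)\geq\cdots\geq\sigma_d(d_xf^n)$, the number of $\varepsilon$-atoms it can overlap is bounded above by a constant times $\prod_{i}\max(1,\sigma_i(d_xf^n))$. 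Taking logarithms, $\log\prod_i\max(1,\sigma_i(d_xf^n))=\sum_{i:\sigma_i\geq 1}\log\sigma_i(d_xf^n)$, which is exactly the quantity whose normalized limit gives $\chi^+$.

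From the overlap count I would pass to entropy using the standard subadditivity/convexity inequalities: the conditional entropy $H_\mu(\mathcal P^n\mid \mathcal P)$, or equivalently the entropy generated by refining under $f^n$, is bounded by the logarithm of the average number of atoms meeting each image, and Jensen's inequality moves the $\log$ inside the integral to yield
$$h_\mu(f^n)\leq \int \log\Big(\prod_i \max(1,\sigma_i(d_xf^n))\Big)\,d\mu(x)+O(1),$$
where the $O(1)$ absorbs the bounded geometric constants coming from the atlas and the comparison between balls and ellipsoids. Dividing by $n$ and letting $n\to\infty$, the subadditive ergodic theorem (Kingman) identifies $\lim_n \frac1n\log\sigma_i(d_xf^n)$ with the Lyapunov exponents, so the right-hand side converges to $\int\chi^+(x)\,d\mu(x)$, while the $O(1)$ term vanishes in the limit.

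The main obstacle I expect is making the overlap count both uniform and asymptotically sharp: one must control the nonlinear distortion of $f^n$ over the fixed scale $\varepsilon$ independently of $n$, which is delicate because as $n$ grows the relevant scale on which $f^n$ looks linear shrinks. The clean way around this is precisely the device of fixing $n$ first, extracting the single-iterate bound with constants depending on $n$ only through the derivative, and only afterwards sending $n\to\infty$ after normalization; compactness guarantees the uniform bounds on $\|df\|$ and the injectivity radius that keep all the geometric constants finite and uniform. Controlling the interplay between the choice of $\varepsilon$, the partition boundaries (which should be $\mu$-negligible), and the $n$-dependence is where the real care lies.
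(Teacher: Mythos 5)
This theorem is stated in the paper only as background: the paper gives no proof of it and simply cites Ruelle's original article, so there is no internal argument to compare against. Your sketch is, in substance, a correct outline of that classical proof: replace $f$ by $f^n$ and use $h_\mu(f^n)=n\,h_\mu(f)$; bound the entropy of a fine partition by the average, over atoms $P$, of the logarithm of the number of atoms met by $f^n(P)$; estimate that count by $C\prod_i\max\bigl(1,\sigma_i(d_xf^n)\bigr)$ with $C$ independent of $n$, which requires choosing the partition diameter \emph{after} fixing $n$ (you correctly identify this as the crux, and compactness is exactly what makes it work); finally divide by $n$ and identify the limit via subadditivity (Kingman) and Oseledec. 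One small correction: Jensen's inequality points the wrong way for the step you describe, since the logarithm of an average dominates the average of the logarithms. You do not need Jensen at all: the conditional entropy of the partition relative to a single atom is at most the logarithm of the number of atoms that atom's image meets, and averaging those logarithms over atoms directly yields the displayed bound $h_\mu(f^n)\leq\int\sum_i\log^{+}\sigma_i(d_xf^n)\,d\mu(x)+O(1)$.
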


\subsection{Interval exchange transformations} As said above, the main idea to construct the family of counterexamples to Ruelle's inequality is to imitate the dynamics of a countable interval exchange transformation.\\
A countable interval exchange transformation is an invertible measurable map $T:[0,1)\vers[0,1)$ satisfying the following conditions
\begin{enumerate}
  \item There is a strictly increasing sequence $\{x_{i}\}\subset [0,1)$ and a sequence $\{a_{i}\}\subset \R$ such that $x_{0}=0$, $\lim_{i\vers\infty}x_{i}=1$ and $T(x)=x+a_{i}$ for all $x\in[x_{i},x_{i+1})$;
  \item The unique accumulation point of the set $\{x_{i}+a_{i}\}\cup \{x_{i+1}+a_{i}\}$ is 1;
\end{enumerate}

Denote by $m$ the Lebesgue measure on $[0,1)$ and let $T$ be a countable interval exchange transformation. Since $T$ is piecewisely defined by translations, it preserves $m$.
We denote by $\mathcal{I}_{T}$, or simply $\mathcal{I}$, the partition of $[0,1)$ defined by the intervals $\{[x_{i},x_{i+1})\}_{i\geq 0}$. This partition satisfies the following entropy property.

\begin{prop}[Blume, \cite{MR2968212}]\label{entropiesubintervals} Let $T$ be an interval exchange transformation. If $h_{m}(T)>0$ then $H_{m}(\mathcal{I})=\infty$.
\end{prop}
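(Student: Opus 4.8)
I would prove the contrapositive: assuming $H_m(\mathcal I)<\infty$, show that $h_m(T)=0$. The guiding point is that $T$ restricts to a translation — hence an isometry — on each atom of $\mathcal I$, so it creates no local expansion and all of its entropy is forced to be ``combinatorial'', carried by $\mathcal I$; finiteness of $H_m(\mathcal I)$ should then make it vanish. Concretely, subadditivity of entropy already gives
$$h_m(T,\mathcal I)=\lim_{n\to\infty}\frac1n H_m\Big(\bigvee_{i=0}^{n-1}T^{-i}\mathcal I\Big)\le H_m(\mathcal I)<\infty,$$
so $\mathcal I$ has finite dynamical entropy and the Kolmogorov--Sinai machinery applies to it.

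The heart of the matter is the claim that $\mathcal I$ is a \emph{one-sided generator}, meaning that the forward itinerary $n\mapsto(\text{atom of }\mathcal I\text{ containing }T^n x)$ determines $x$ for $m$-a.e.\ $x$, i.e.\ $\bigvee_{n=0}^{\infty}T^{-n}\mathcal I$ is the full Borel $\sigma$-algebra. Granting this, $\bigvee_{n=1}^{\infty}T^{-n}\mathcal I=T^{-1}\big(\bigvee_{n=0}^{\infty}T^{-n}\mathcal I\big)$ is again the whole $\sigma$-algebra, so $\mathcal I$ is measurable with respect to its own future and
$$h_m(T,\mathcal I)=H_m\Big(\mathcal I\ \Big|\ \bigvee_{n=1}^{\infty}T^{-n}\mathcal I\Big)=0;$$
since $\mathcal I$ generates with finite entropy, Kolmogorov--Sinai then yields $h_m(T)=h_m(T,\mathcal I)=0$.

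The main obstacle is the generator claim, which can genuinely fail — for instance for periodic interval exchanges such as rational rotations — and here the piecewise-translation structure, invertibility and finiteness of $m$ show that it fails only on a set that is harmless for entropy. Each $\bigvee_{i=0}^{N}T^{-i}\mathcal I$ is a partition of $[0,1)$ into intervals $J_N(x)$ on which $T^N$ is a single translation; let $K=\bigcap_N J_N(x)$ be a maximal interval of constant forward itinerary. Every $T^nK$ is then an interval of length $|K|$, so were the $T^nK$ pairwise disjoint their measures would sum to infinity, contradicting $m([0,1))=1$; hence $T^pK=K$ for some $p\ge1$, and $T^p|_K$, being a translation of $K$ onto itself, is the identity. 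Thus the points whose forward itineraries fail to separate them form a $T$-invariant, at most countable, union of periodic intervals $A^c$. I would then split $[0,1)$ into $A^c$ and its complement $A$: on $A$ the partition $\mathcal I$ genuinely generates, so the computation above gives zero entropy, while $A^c$ carries purely periodic dynamics, also of zero entropy. As entropy is the $m$-average of the entropies of these two invariant pieces, $h_m(T)=0$. The points needing care are the mod-$0$ invariance of $A$ and the finiteness of the partition entropy after restriction to $A$ (so that Kolmogorov--Sinai applies there), both routine consequences of $H_m(\mathcal I)<\infty$ together with the fact that every $\bigvee_{i=0}^{N}T^{-i}\mathcal I$ is a partition into intervals.
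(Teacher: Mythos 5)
The paper itself contains no proof of this proposition: it is imported as a black box from Blume \cite{MR2968212}, so there is no internal argument to compare yours against. Judged on its own, your proof of the contrapositive is correct, and it follows the natural route for piecewise translations: every cell of $\bigvee_{i=0}^{N}T^{-i}\mathcal I$ is an interval on which the relevant powers of $T$ are translations; a positive-length cell $K$ of the infinite refinement must satisfy $T^{p}K=K$ for some $p\geq 1$ (by preservation of $m$ and finiteness of total measure), whence $T^{p}|_{K}$ is the identity and the union $A^{c}$ of such cells carries zero entropy; on the complement $A$ the cells are singletons, so $\mathcal I$ is a one-sided generator, and for an invertible transformation a one-sided generator of finite entropy forces
$$h_{m_A}(T,\mathcal I)=H_{m_A}\Big(\mathcal I\ \Big|\ \bigvee_{n\geq 1}T^{-n}\mathcal I\Big)=0,$$
since the one-sided tail $\sigma$-algebra is already the full $\sigma$-algebra mod $0$. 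Three steps deserve to be written out: (i) the jump from ``the $T^{n}K$ are not pairwise disjoint'' to ``$T^{p}K=K$'' needs the observation that all points of $T^{p}K$ share one forward itinerary, so $T^{p}K$ lies in a single cell, which meets $K$ and hence equals it (cells are equal or disjoint), equality of lengths finishing the claim --- your appeal to maximality is the right ingredient but the sentence glosses over it; (ii) identifying $\bigvee_{n\geq 0}T^{-n}\mathcal I$ on $A$ with the Borel $\sigma$-algebra mod $0$ uses the standard fact that a countably generated sub-$\sigma$-algebra of a standard probability space whose atoms are points is everything; (iii) Kolmogorov--Sinai and Rokhlin's formula must be used in their countable-partition, finite-entropy versions, the finiteness of $H_{m_A}(\mathcal I|_{A})$ following from $H_{m}(\mathcal I)<\infty$ because $-t\log t$ is increasing on $(0,1/e)$ and only finitely many atoms can have measure exceeding $1/e$. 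These are all routine, as you say; with them filled in, your argument is a complete, self-contained proof of a statement the paper only cites, and it exhibits the mechanism (periodic cells versus one-sided generation) that explains why a piecewise isometry with a finite-entropy cut partition cannot have positive entropy.
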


Let $(X,m,T)$ be an ergodic probability dynamical system. We say that $T$ is \textit{aperiodic} if it is invertible and the set of periodic points is a set of $m$-measure equal to zero. The following theorem says that the study of aperiodic dynamical can be reduced to the study of countable interval exchange transformations.

\begin{theorem}[Arnoux-Orstein-Weiss, \cite{MR788073}]\label{Arnoux} Every aperiodic dynamical system is measurably conjugated to a countable interval exchange transformation over $[0,1)$ endowed with the Lebesgue measure.
\end{theorem}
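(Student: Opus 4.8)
The plan is to realize the abstract system geometrically by a cutting-and-stacking construction governed by a refining sequence of Rokhlin towers, and then to read off an interval exchange from the resulting tower of intervals. First I would use aperiodicity through Rokhlin's lemma: for each $N$ and each $\varepsilon>0$ there is a base set $B$ whose iterates $B,TB,\dots,T^{N-1}B$ are pairwise disjoint and cover $X$ up to a set of measure $<\varepsilon$. Since $(X,m)$ is a standard probability space it carries a refining sequence of finite partitions $\mathcal{Q}_1\prec\mathcal{Q}_2\prec\cdots$ generating its $\sigma$-algebra modulo $m$. Interleaving these two facts, I would build inductively a nested sequence of castles (finite disjoint unions of Rokhlin towers) $\mathcal{C}_1\prec\mathcal{C}_2\prec\cdots$ whose tower heights tend to infinity, whose uncovered error sets have measure tending to $0$, and for which the partition of $X$ into the individual levels of $\mathcal{C}_n$ refines $\mathcal{Q}_n$; the latter forces the level partitions to generate the $\sigma$-algebra.

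Next I would turn this combinatorial data into geometry on $[0,1)$. To each level of each tower of $\mathcal{C}_n$ I would assign a half-open subinterval of length equal to the $m$-measure of that level, laid out in a fixed order, and I would define $S$ to carry each such interval by a translation onto the interval assigned to the level directly above it. Passing from $\mathcal{C}_n$ to $\mathcal{C}_{n+1}$ amounts to cutting the columns vertically and restacking them, so the interval partitions refine compatibly and the translations already defined are never disturbed. The measure-preserving bijections $\phi_n$ identifying the levels of $\mathcal{C}_n$ with their assigned intervals then converge to a measurable map $\phi\colon X\to[0,1)$; because the level partitions generate and the error sets shrink to measure zero, $\phi$ is an isomorphism from $(X,m)$ to $([0,1),\mathrm{Leb})$ and conjugates $T$ to the piecewise translation $S$.

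Finally I would verify that $S$ is a countable interval exchange transformation in the exact sense of the definition. The intervals of continuity of $S$ are countably many and, by laying out the levels so that the shrinking error regions and the tops of the columns are always placed toward the right end of $[0,1)$, one arranges their endpoints into a strictly increasing sequence $x_0=0<x_1<\cdots$ with $x_i\to 1$, and similarly forces the image endpoints $\{x_i+a_i\}\cup\{x_{i+1}+a_i\}$ to accumulate only at $1$. This bookkeeping is the main obstacle: an arbitrary cutting-and-stacking scatters new breakpoints throughout $[0,1)$, so the real work is to organize the geometric layout across all stages so that every discontinuity---of $S$ and of $S^{-1}$---is pushed toward the single point $1$, all the while keeping $\phi$ injective almost everywhere through the generating property secured in the first step.
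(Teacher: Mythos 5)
This theorem is not proved in the paper at all: it is quoted from Arnoux--Ornstein--Weiss \cite{MR788073}, so your attempt can only be measured against the original cutting-and-stacking argument, whose overall architecture (Rokhlin towers refined along a generating sequence of partitions, interval realizations of the castles, a conjugacy extracted in the limit) your sketch does reproduce. The routine parts of your plan --- Rokhlin's lemma for aperiodic systems, copying the partitions $\mathcal{Q}_n$ into the castles so the level partitions generate, and recovering an isomorphism $\phi$ from the generating property --- are sound.

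The genuine gap is in your last paragraph, and it is not mere ``bookkeeping'': it is the entire content of the theorem, and moreover your construction imposes two requirements that are incompatible as stated. On one hand you insist that passing from $\mathcal{C}_n$ to $\mathcal{C}_{n+1}$ only cuts the interval columns in place, ``so the translations already defined are never disturbed''; on the other hand you want the column tops and error regions ``always placed toward the right end of $[0,1)$''. Under in-place cutting the position of every new level is forced: the top level of a stage-$(n+1)$ column is a slice of the top level of some stage-$n$ column (or of the old error set), so its interval must sit inside the old top-level interval, wherever that already is --- and which slices end up on top is dictated by the dynamics of $T$, not by your geometric preference. Consequently the new discontinuities of $S$ (which appear exactly inside old top-level intervals once something is stacked on them) and of $S^{-1}$ (inside old bottom-level intervals) can never be moved rightward after the first stage; they accumulate on the closed set $\bigcap_n \overline{E_n}$, where $E_n$ is the union of the stage-$n$ top, bottom and error intervals, which has measure zero but is in general a Cantor set spread through $[0,1)$, not the single point $1$. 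So conditions (1) and (2) of the paper's definition fail for the map your scheme produces. The known proof resolves this by giving up exact compatibility: at stage $n+1$ one re-arranges the layout on a right-hand block of measure $\varepsilon_n$ containing $E_{n+1}$, pushing it into $(1-\varepsilon_{n+1},1)$, with $\sum_n \varepsilon_n<\infty$ so that the partial maps $S_n$ and the identifications $\phi_n$ still stabilize almost everywhere by Borel--Cantelli, while all discontinuities created in $[0,1-\varepsilon_n]$ remain finite in number and frozen. That controlled-rearrangement device --- not nested cutting --- is the technical heart of Arnoux--Ornstein--Weiss, and it is precisely the step your proposal names and then leaves unproved.
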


This theorem give us plenty of useful dynamical systems to manipulate in order to construct our counterexample to Ruelle's inequality. More precisely, since the entropy is invariant by conjugation we see that for all $h\in (0,\infty]$ there exists a countable interval exchange transformation with measure-theoretic entropy (with respect to the Lebesgue measure) equal to $h$.

\section{Suspension flows as smooth dynamical systems}

The aim of this section is to construct a smooth dynamical system which looks roughly like a suspension flow over a fixed countable interval exchange transformation. Let $T:[0,1)\vers[0,1)$ be a countable interval exchange transformation and let $I=(0,1)$ be the unit interval. We will consider the family $\{I_{i}\}_{i\in \N}$ of subintervals of $I$ defined as $I_{0}=(x_{0},x_{1})$ and $I_{i}=[x_{i},x_{i+1})$ for all $i\geq 1$. By simplicity we put $l_{i}=m(I_{i})$ and $S=\{x_{i}\}_{i\in \N}$. Note that since $T$ is piecewise defined by translations, the map $T|_{I\setminus S}$ is a smooth transformation.

\subsection{Construction of the Riemannian manifold} We are going to construct a function $r:I\vers\R^{+}\cup\{+\infty\}$ such that $r|_{I\setminus S}$ is smooth and $\lim_{x\vers x_{i}}r(x)=+\infty$ for all $i\geq 0$. Moreover, this function will be constant equal to 1 on a set of large Lebesgue measure. For all $i\geq 0$, let $l_{i}$ be the length of the interval $I_{i}$ and consider a real number $0<b_{i}<l_{i}/2$. The $b_{i}$'s will be chosen more precisely in section 4. We define five subintervals of $I_{i}$ as follows:
$$I_{i,1}=]x_{i},x_{i}+b_{i}/2[, \ I_{i,2}=[x_{i}+b_{i}/2,x_{i}+b_{i}[, \ I_{i,3}=[x_{i}+b_{i},x_{i+1}-b_{i}[,$$
$$\ I_{i,4}=[x_{i+1}-b_{i},x_{i+1}-b_{i}/2[, \ I_{i,5}=[x_{i+1}-b_{i}/2,x_{i+1}[.$$
Let $\alpha:\R\vers[0,1]$ be a smooth function such that $\alpha|_{(-\infty,0]}\equiv 1$, the restriction $\alpha|_{I}$ is strictly decreasing and $\alpha|_{[1,\infty)}\equiv 0$. Let $\gamma_{i,2}:[x_{i}+b_{i}/2,x_{i}+b_{i}]\vers[0,1]$ and $\gamma_{i,4}:[x_{i+1}-b_{i},x_{i+1}-b_{i}/2]\vers[0,1]$ be defined by
$$\gamma_{i,2}(x)=(x-(x_{i}+b_{i}/2))/(b_{i}/2) \quad\mbox{and}\quad \gamma_{i,4}(x)=(x-(x_{i+1}-b_{i}))/(b_{i}/2).$$
Finally consider the function $\alpha_{i}:I_{i}\vers\R$ defined by
$$\alpha_{i}(x)=
\begin{cases}
1, & \text{if }x\in I_{i,1}\\
(\alpha\circ \gamma_{i,2})(x), & \text{if }x\in I_{i,2}\\
0, & \text{if }x\in I_{i,3}\\
1-(\alpha\circ \gamma_{i,4})(x), & \text{if }x\in I_{i,4}\\
1, & \text{if }x\in I_{i,5}.
\end{cases}
$$
Note that, for all $i\geq 0$, the function $\alpha_{i}$ is smooth by construction. In order to define the function $r$, we proceed as follows: first consider for all $i\geq 0$ the function $f_{i}:I_{i}\vers\R^{+}$ defined by
$$f_{i}(x)=
\begin{cases}
1-\log((x-x_{i})/b_{i}), & \text{if }x\in(x_{i},x_{i}+l_{i}/2] \\
1-\log((x_{i+1}-x)/b_{i}), & \text{if }x\in[x_{i}+l_{i}/2,x_{i+1}).
\end{cases}$$
The function $f_{i}$ is smooth on $(x_{i},x_{i}+l_{i}/2)$ and $(x_{i}+l_{i}/2,x_{i+1})$. The map $r_{i}:I_{i}\vers \R$ defined by $r_{i}(x)=\alpha_{i}(x)f_{i}(x)+(1-\alpha_{i}(x))$ is smooth on $I_{i}$. It is constant equal to $1$ over $I_{i,3}$ since $\alpha_{i}$ is equal to zero over $I_{i,3}$. It is equal to $f_{i}$ over $I_{i,1}\cup I_{i,5}$. Finally, define the map $r$ as $r_{i}$ over $I_{i}$ and equal to $+\infty$ otherwise.\\

\begin{center}
\begin{tikzpicture}[scale=2]
\coordinate (A) at (0,0);
\coordinate (B) at (5,0);
\coordinate (X1) at (0.1,0);
\coordinate (X2) at (0.6,0);
\coordinate (X3) at (1.1,0);
\coordinate (X4) at (3.9,0);
\coordinate (X5) at (4.4,0);
\coordinate (X6) at (4.9,0);

\draw (A)--(B) node[midway,above=90pt]{$\textbf{Graph of the function } r_{i}$};
\tiny
\draw  (X1) node{$|$} node[below=7.4pt]{$x_{i}$};
\draw  (X3) node{$|$} node[below=5.9pt]{$x_{i}+b_{i}$};
\draw  (X4) node{$|$} node[below=5.9pt]{$x_{i+1}-b_{i}$};
\draw  (X6) node{$|$} node[below=7.4pt]{$x_{i+1}$};
\draw  (2.5,0) node{$|$} node[below=5.9pt]{$x_{i}+l_{i}/2$};

\draw [dashed](X1)--(0.1,1.5);
\draw[color=gray!80] [loosely dashed](X2)--(0.6,1.5);
\draw[color=gray!80] [loosely dashed](X3)--(1.1,1.5);
\draw[color=gray!80] [loosely dashed](X4)--(3.9,1.5);
\draw[color=gray!80] [loosely dashed](X5)--(4.4,1.5);
\draw [dashed](X6)--(4.9,1.5);

\draw [thick][domain=0.13:0.6]plot(\x,{(1-ln((\x-0.1)))/2.93});
\draw [thick][domain=0.6:1.1]plot(\x,{(exp(1)*exp(-1/(1-((\x-0.6)/0.5)^(2)))*((1-ln((\x-0.1))))+(1-exp(1)*exp(-1/(1-((\x-0.6)/0.5)^(2)))))/2.93});
\draw [thick][domain=1.1:3.9]plot(\x,1/2.93);
\draw [thick][domain=4.4:3.9]plot(\x,{(exp(1)*exp(-1/(1-((\x-4.4)/0.5)^(2)))*((1-ln((4.9-\x))))+(1-exp(1)*exp(-1/(1-((\x-4.4)/0.5)^(2)))))/2.93});
\draw [thick][domain=4.4:4.87]plot(\x,{(1-ln((4.9-\x)))/2.93});

\end{tikzpicture}\hspace{25pt}
\end{center}

\noindent

Consider $T$ and $r$ as above. We define a topological space $M=M(T,r)$ as the quotient $I\times\R/\sim$ with the induced topology from $I\times \R$, where the equivalence relation $\sim$ is defined by $(x,r(x))\sim (T(x),-r(x))$. We denote by $\pi:I\times\R\vers M$ the canonical projection defined by this relation. By simplicity denote by $[x,y]$ the projection of the point $(x,y)\in I\times \R$ onto $M$. Our goal is to prove first the existence of a structure of smooth (noncompact) manifold on $M$, and second, that it admits a Riemannian metric. Let $M^{\ast}$ be the subset of $M$ defined by
$$M^{\ast}=\{[x,y]\in M: \ x\in ]0,1[, \ -r(T^{-1}x)< y< r(x)\})$$
and let $F$ be the subset of $M$ defined by $F=\{[x,r(x)]:x\in I\setminus S\}$.

\begin{center}
\begin{tikzpicture}[scale=4.45]
\coordinate (A) at (0,0);
\coordinate (B) at (1,0);
\coordinate (X1) at (0.2,0);
\coordinate (X2) at (0.3,0);
\coordinate (X3) at (0.6,0);
\coordinate (X4) at (0.85,0);
\coordinate (X5) at (0.9,0);
\coordinate (X6) at (0.94,0);
\coordinate (X7) at (0.97,0);

\coordinate (TX1) at (0.3,0);
\coordinate (TX2) at (0.5,0);
\coordinate (TX3) at (0.75,0);
\coordinate (TX4) at (0.85,0);
\coordinate (TX5) at (0.89,0);
\coordinate (TXBLA) at (0.9,0);

\draw (A)--(B) node[midway,above=105pt]{$\textbf{The topological space } M$};

\clip (0,-2/2.7) rectangle (1,2/2.7);

\draw[color=gray!80] [loosely dashed](A)--(0.0,1);

\draw [domain=0.01:0.025]plot(\x,{(1-ln((\x-0)/0.05))/2.7});
\draw [domain=0.025:0.05]plot(\x,{((exp(1)*exp(-1/(1-((\x-(0+0.025))/(0.025))^(2))))*(1-ln((\x-0)/0.05))+(1-exp(1)*exp(-1/(1-((\x-(0+0.025))/(0.025))^(2)))))/2.7});
\draw [domain=0.05:0.15]plot(\x,1/2.7);
\draw [domain=0.15:0.175]plot(\x,{((1-exp(1)*exp(-1/(1-((\x-(0.2-0.05))/(0.025))^(2))))*(1-ln((0.2-\x)/(0.05)))+exp(1)*exp(-1/(1-((\x-(0.2-0.05))/(0.025))^(2))))/2.7});
\draw [domain=0.175:0.1901]plot(\x,{(1-ln((0.2-\x)/(0.05)))/2.7});

\draw[color=gray!80] [loosely dashed](X1)--(0.2,1);

\draw [domain=0.205:0.212]plot(\x,{(1-ln((\x-0.2)/0.025))/2.7});
\draw [domain=0.212:0.225]plot(\x,{((exp(1)*exp(-1/(1-((\x-(0.2+0.012))/(0.013))^(2))))*(1-ln((\x-0.2)/0.025))+(1-exp(1)*exp(-1/(1-((\x-(0.2+0.012))/(0.013))^(2)))))/2.7});
\draw [domain=0.225:0.275]plot(\x,1/2.7);
\draw [domain=0.275:0.288]plot(\x,{((1-exp(1)*exp(-1/(1-((\x-(0.3-0.025))/(0.013))^(2))))*(1-ln((0.3-\x)/(0.025)))+exp(1)*exp(-1/(1-((\x-(0.3-0.025))/(0.013))^(2))))/2.7});
\draw [domain=0.288:0.295]plot(\x,{(1-ln((0.3-\x)/(0.025)))/2.7});

\draw[color=gray!80] [loosely dashed](X2)--(0.3,1);

\draw [domain=0.32:0.35]plot(\x,{(1-ln((\x-0.3)/0.1))/2.7});
\draw [domain=0.35:0.4]plot(\x,{((exp(1)*exp(-1/(1-((\x-(0.3+0.05))/(0.05))^(2))))*(1-ln((\x-0.3)/0.1))+(1-(exp(1)*exp(-1/(1-((\x-(0.3+0.05))/(0.05))^(2))))))/2.7});
\draw [domain=0.4:0.5]plot(\x,1/2.7);
\draw [domain=0.5:0.55]plot(\x,{((1-exp(1)*exp(-1/(1-((\x-(0.6-0.1))/(0.05))^(2))))*(1-ln((0.6-\x)/(0.1)))+exp(1)*exp(-1/(1-((\x-(0.6-0.1))/(0.05))^(2))))/2.7});
\draw [domain=0.55:0.58]plot(\x,{(1-ln((0.6-\x)/(0.1)))/2.7});

\draw[color=gray!80] [loosely dashed](X3)--(0.6,1);

\draw [domain=0.61:0.625]plot(\x,{(1-ln((\x-0.6)/0.05))/2.7});
\draw [domain=0.625:0.65]plot(\x,{((exp(1)*exp(-1/(1-((\x-(0.6+0.025))/(0.025))^(2))))*(1-ln((\x-0.6)/0.05))+ (1-exp(1)*exp(-1/(1-((\x-(0.6+0.025))/(0.025))^(2)))) )/2.7});
\draw [domain=0.65:0.8]plot(\x,1/2.7);
\draw [domain=0.8:0.825]plot(\x,{((1-exp(1)*exp(-1/(1-((\x-(0.85-0.05))/(0.025))^(2))))*(1-ln((0.85-\x)/(0.05)))+exp(1)*exp(-1/(1-((\x-(0.85-0.05))/(0.025))^(2))))/2.7});
\draw [domain=0.825:0.8401]plot(\x,{(1-ln((0.85-\x)/(0.05)))/2.7});

\draw[color=gray!80] [loosely dashed](X4)--(0.85,1);

\draw[color=black] [dotted](0.851,1/2.7)--(1,1/2.7);
\draw[color=black] [dotted](0.851,-1/2.7)--(1,-1/2.7);

\draw[color=gray!80] [loosely dashed](X5)--(0.9,1);
\draw[color=gray!80] [loosely dashed](X6)--(0.94,1);
\draw[color=gray!80] [loosely dashed](X7)--(0.97,1);
\draw[color=gray!80] [loosely dashed](0.98,0)--(0.98,1);
\draw[color=gray!80] [loosely dashed](0.99,0)--(0.99,1);
\draw[color=gray!80] [loosely dashed](0.999,0)--(0.999,1);
\draw[color=gray!80] [loosely dashed](0.9999,0)--(0.9999,1);
\draw[color=gray!80] [loosely dashed](B)--(1,1);

\draw[color=gray!80] [loosely dashed](A)--(0.0,-1);

\draw [domain=0.02:0.05]plot(\x,{-(1-ln((\x-0)/0.1))/2.7});
\draw [domain=0.05:0.1]plot(\x,{-((exp(1)*exp(-1/(1-((\x-(0+0.05))/(0.05))^(2))))*(1-ln((\x-0)/0.1))+(1-(exp(1)*exp(-1/(1-((\x-(0+0.05))/(0.05))^(2))))))/2.7});
\draw [domain=0.1:0.2]plot(\x,-1/2.7);
\draw [domain=0.2:0.25]plot(\x,{-((1-exp(1)*exp(-1/(1-((\x-(0.3-0.1))/(0.05))^(2))))*(1-ln((0.3-\x)/(0.1)))+exp(1)*exp(-1/(1-((\x-(0.3-0.1))/(0.05))^(2))))/2.7});
\draw [domain=0.25:0.28]plot(\x,{-(1-ln((0.3-\x)/(0.1)))/2.7});

\draw[color=gray!80] [loosely dashed](TX1)--(0.3,-1);

\draw [domain=0.31:0.325]plot(\x,{-(1-ln((\x-0.3)/0.05))/2.7});
\draw [domain=0.325:0.35]plot(\x,{-((exp(1)*exp(-1/(1-((\x-(0.3+0.025))/(0.025))^(2))))*(1-ln((\x-0.3)/0.05))+(1-exp(1)*exp(-1/(1-((\x-(0.3+0.025))/(0.025))^(2)))))/2.7});
\draw [domain=0.35:0.45]plot(\x,-1/2.7);
\draw [domain=0.45:0.475]plot(\x,{-((1-exp(1)*exp(-1/(1-((\x-(0.5-0.05))/(0.025))^(2))))*(1-ln((0.5-\x)/(0.05)))+exp(1)*exp(-1/(1-((\x-(0.5-0.05))/(0.025))^(2))))/2.7});
\draw [domain=0.475:0.4901]plot(\x,{-(1-ln((0.5-\x)/(0.05)))/2.7});

\draw[color=gray!80] [loosely dashed](TX2)--(0.5,-1);

\draw [domain=0.51:0.525]plot(\x,{-(1-ln((\x-0.5)/0.05))/2.7});
\draw [domain=0.525:0.55]plot(\x,{-((exp(1)*exp(-1/(1-((\x-(0.5+0.025))/(0.025))^(2))))*(1-ln((\x-0.5)/0.05))+ (1-exp(1)*exp(-1/(1-((\x-(0.5+0.025))/(0.025))^(2)))) )/2.7});
\draw [domain=0.55:0.7]plot(\x,-1/2.7);
\draw [domain=0.7:0.725]plot(\x,{-((1-exp(1)*exp(-1/(1-((\x-(0.75-0.05))/(0.025))^(2))))*(1-ln((0.75-\x)/(0.05)))+exp(1)*exp(-1/(1-((\x-(0.75-0.05))/(0.025))^(2))))/2.7});
\draw [domain=0.725:0.7401]plot(\x,{-(1-ln((0.75-\x)/(0.05)))/2.7});

\draw[color=gray!80] [loosely dashed](TX3)--(0.75,-1);

\draw [domain=0.755:0.762]plot(\x,{-(1-ln((\x-0.75)/0.025))/2.7});
\draw [domain=0.762:0.775]plot(\x,{-((exp(1)*exp(-1/(1-((\x-(0.75+0.012))/(0.013))^(2))))*(1-ln((\x-0.75)/0.025))+(1-exp(1)*exp(-1/(1-((\x-(0.75+0.012))/(0.013))^(2)))))/2.7});
\draw [domain=0.775:0.825]plot(\x,-1/2.7);
\draw [domain=0.825:0.838]plot(\x,{-((1-exp(1)*exp(-1/(1-((\x-(0.85-0.025))/(0.013))^(2))))*(1-ln((0.85-\x)/(0.025)))+exp(1)*exp(-1/(1-((\x-(0.85-0.025))/(0.013))^(2))))/2.7});
\draw [domain=0.838:0.845]plot(\x,{-(1-ln((0.85-\x)/(0.025)))/2.7});

\draw[color=gray!80] [loosely dashed](TX4)--(0.85,-1);
\draw[color=gray!80] [loosely dashed](TX5)--(0.89,-1);
\draw[color=gray!80] [loosely dashed](TXBLA)--(0.9,-1);
\draw[color=gray!80] [loosely dashed](0.94,0)--(0.94,-1);
\draw[color=gray!80] [loosely dashed](0.98,0)--(0.98,-1);
\draw[color=gray!80] [loosely dashed](0.99,0)--(0.99,-1);
\draw[color=gray!80] [loosely dashed](0.999,0)--(0.999,-1);
\draw[color=gray!80] [loosely dashed](0.9999,0)--(0.9999,-1);
\draw[color=gray!80] [loosely dashed](B)--(1,-1);

\filldraw[draw=black, fill=gray!50]
(0.40,1/2.7)--(0.5,1/2.7)--plot[smooth,domain=0.5:0.4](\x,{1/2.7-sqrt((0.05)^2-(\x-0.45)^2)})--cycle;
\draw (0.45,1/2.7) node{$\bullet$} node[above]{$z$};

\filldraw[draw=black, fill=gray!50]
(0.1,-1/2.7)--(0.2,-1/2.7)--plot[smooth,domain=0.2:0.1](\x,{-1/2.7+sqrt((0.05)^2-(\x-0.15)^2)})--cycle;
\draw (0.15,-1/2.7) node{$\bullet$} node[below]{$z$};

\tiny
\draw (0.45,0) node{$\bullet$} node[below]{$[x,0]$};
\draw (0.15,0) node{$\bullet$} node[below]{$[Tx,0]$};

\end{tikzpicture}
\end{center}

\begin{prop}\label{ExistenceDiffStructureM} The topological space $M=M(T,r)$ admits a structure of a smooth manifold.
\end{prop}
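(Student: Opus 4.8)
The plan is to exhibit an explicit smooth atlas on $M$ built from two kinds of charts: \emph{interior charts}, covering the open set $M^{\ast}$, and \emph{seam charts}, covering neighbourhoods of the gluing locus $F$. The two features that make everything work are that $r$ is smooth on $I\setminus S$ and that $T$ acts on each $I_{i}$ as the translation $x\mapsto x+a_{i}$; the whole verification reduces to combining these two facts.

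For the interior charts, fix $p=[x_{0},y_{0}]\in M^{\ast}$, so that $-r(T^{-1}x_{0})<y_{0}<r(x_{0})$. Since $y_{0}$ lies strictly between the floor and the roof, a sufficiently small open box $B\subset I\times\R$ around $(x_{0},y_{0})$ still lies in $\{(x,y):-r(T^{-1}x)<y<r(x)\}$. No two points of $B$ are $\sim$-related, and no point of $B$ is related to a point outside $B$, because $\sim$ only identifies points of the roof $y=r(x)$ with points of the floor $y=-r(T^{-1}x)$; hence $\pi|_{B}$ is injective, $\pi(B)$ is open, and $\varphi=(\pi|_{B})^{-1}$ is a homeomorphism onto $B\subset\R^{2}$. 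I take these $(x,y)$-coordinates as interior charts. The fibres over the points of $S$ are of this type: since $r\equiv+\infty$ on $S$, the roof recedes to infinity as $x\to x_{i}$, so in any bounded region no seam is present and the discontinuities of $T$ at the $x_{i}$ never enter the picture. This is exactly the point of letting $r$ blow up along $S$.

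For the seam charts, fix $q=[x_{0},r(x_{0})]=[Tx_{0},-r(x_{0})]\in F$ with $x_{0}\in(x_{i},x_{i+1})$. A neighbourhood of $q$ is obtained, as in the figure, by gluing a half-disk lying just below the roof over $x_{0}$ to a half-disk lying just above the floor over $Tx_{0}$. Concretely, for $x$ near $x_{0}$ and $y$ slightly below $r(x)$ I set $\varphi([x,y])=(x-x_{0},\,y-r(x))$, and for $w$ near $Tx_{0}$ (so that $T^{-1}w=w-a_{i}$) and $y'$ slightly above $-r(w-a_{i})$ I set $\varphi([w,y'])=(w-a_{i}-x_{0},\,y'+r(w-a_{i}))$. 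On the seam one has $[x,r(x)]=[Tx,-r(x)]$, and both formulas return $(x-x_{0},0)$; thus $\varphi$ is well defined and is a homeomorphism from a neighbourhood of $q$ onto a neighbourhood of the origin in $\R^{2}$, carrying $F$ onto the horizontal axis, the lower half-disk into the lower half-plane and the upper half-disk into the upper half-plane.

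It remains to check that the transition maps are $C^{\infty}$, which is the heart of the matter. Between two interior charts the transition is the identity. Between an interior chart and a seam chart the transition is, on the lower branch, $(x,y)\mapsto(x-x_{0},\,y-r(x))$ and, on the upper branch, $(w,y')\mapsto(w-a_{i}-x_{0},\,y'+r(w-a_{i}))$; since $r$ is smooth on $I\setminus S$ and $x_{0}\in I\setminus S$, both are smooth with smooth inverses. Between two overlapping seam charts the transition is a composition of translations (coming from the maps $T|_{I_{i}}$) with shears of the form $y\mapsto y\pm r(\,\cdot\,)$, hence again $C^{\infty}$. Finally $M$ is second countable, being covered by countably many charts with second-countable domains, and it is Hausdorff: distinct points are separated inside a common interior or seam chart, the only delicate configurations—seams accumulating, or points escaping as $x\to x_{i}$ or $x\to 1$—being controlled by the accumulation hypothesis $(2)$ on $T$ together with $r\to+\infty$ along $S$. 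I expect the main obstacle to be precisely the smooth matching across $F$; once the seam charts are arranged so as to subtract the roof, smoothness of the transitions reduces to the smoothness of $r$ on $I\setminus S$ and the translation structure of $T$.
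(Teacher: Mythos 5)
Your proof is correct and follows essentially the same route as the paper: interior charts on $M^{\ast}$ given by local inverses of $\pi$ (the paper's ``first-kind'' charts), and seam charts near $F$ that unfold a half-neighbourhood below the roof over $x_{0}$ and a half-neighbourhood above the floor over $Tx_{0}$ by subtracting/adding $r$ (the paper's ``second-kind'' charts), with smoothness of all transition maps reduced to the smoothness of $r$ on $I\setminus S$ and the piecewise-translation structure of $T$. The differences are cosmetic (your seam charts are centred at the origin, and you add checks the paper leaves implicit: well-definedness on the seam, Hausdorffness, second countability).
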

\begin{proof} We will consider two families of local charts on $M$. For $z=[x,y]\in M^{\ast}$, let $\varepsilon>0$ be such that the Euclidean $\varepsilon$-ball centered at $(x,y)\in\R^{2}$, denoted by $B((x,y),\varepsilon)$, is contained in the set $\pi^{-1}M^{\ast}$. The local chart around $z$ is then defined by the inverse map $\psi^{e}_{z}=\pi^{-1}$ from $\pi(B((x,y),\varepsilon))$ to $B((x,y),\varepsilon)$. Such a local chart will be called \emph{first-kind local chart}. On the other hand, if $z=[\tilde{x},r(\tilde{x})]\in F$, the definition of a local chart around $z$ is more delicate. Consider $\tilde{x}\in I_{j}$ for some $j\geq 0$. Choose two real numbers $0<\varepsilon<\frac{1}{2}\min\{|\tilde{x}-x_{j}|,|\tilde{x}-x_{j+1}|\}$ and $0<\eta<\frac{1}{2}$. We define the sets $V^{\varepsilon,\eta}_{+,j}(z)$ and $V^{\varepsilon,\eta}_{-,j}(z)$ by
$$V^{\varepsilon,\eta}_{+,j}(z)=\{(x,y):|x-\tilde{x}|<\varepsilon, \ r(x)-\eta<y\leq r(x)\}$$
and
$$V^{\varepsilon,\eta}_{-,j}(z)=\{(x,y):|x-T\tilde{x}|<\varepsilon, \ -r(x)\leq y< -r(x)+\eta\}.$$

\begin{center}
\begin{tikzpicture}[scale=2.5]
\tiny
\filldraw[draw=white, fill=gray!50]
(0,0)--(1,0)--(1,-0.3)--(0,-0.3)--cycle;

\draw [dashed,color=gray] (0,0)--(1,0)--(1,-0.3)--(0,-0.3)--cycle;
\draw [very thick](-0.1,0)--(1.1,0) node[midway,above=6pt]{$\tilde{x}$} node[midway]{$\bullet$};
\draw [thick](0,0) node{$|$} node[above=6pt]{$\tilde{x}-\varepsilon$};
\draw [thick](1,0) node{$|$} node[above=6pt]{$\tilde{x}+\varepsilon$};

\draw (1.2,0)--(1.2,-0.3);
\draw (1.2,0) node{$-$};
\draw (1.2,-0.3) node{$-$};
\draw (1.2,-0.15) node[fill=white]{$\eta$};

\filldraw[draw=white, fill=gray!50]
(0,0.4625)--plot[domain=0:1](\x,{0.7+((\x+0.5)^(2))/4})--(1,1.2625)--plot[domain=1:0](\x,{0.4+((\x+0.5)^(2))/4})--cycle;

\draw [dashed,color=gray][domain=0:1] plot(\x,{0.4+((\x+0.5)^(2))/4});
\draw [dashed,color=gray] (0,0.7625)--(0,0.4625);
\draw [dashed,color=gray] (1,1.2625)--(1,0.9625);
\draw [very thick][domain=-0.1:1.1] plot(\x,{0.7+((\x+0.5)^(2))/4});

\draw (1.2,0.9625)--(1.2,1.2625);
\draw (1.2,0.9625) node{$-$};
\draw (1.2,1.2625) node{$-$};
\draw (1.2,1.1125) node[fill=white]{$\eta$};

\draw [very thick](0.5,0.95) node[above=6pt]{$z$} node{$\bullet$};

\draw [very thick](0.5,0.8) node{$V^{\varepsilon,\eta}_{+,j}(z)$};

\draw [thick,sloped](0.5,0.5)--(0.5,0.3) [->];
\draw [thick](0.65,0.4) node{$\psi^{\varepsilon,\eta}_{z}$};


\filldraw[draw=white, fill=gray!50]
(2,0.1875)--plot[domain=2:3](\x,{0.25-((\x-1.5)^(2))/4})--(3,-0.0125)--plot[domain=3:2](\x,{0.55-((\x-1.5)^(2))/4})--cycle;

\draw [dashed,color=gray][domain=2:3] plot(\x,{0.55-((\x-1.5)^(2))/4});
\draw [dashed,color=gray] (2,0.1875)--(2,0.4875);
\draw [dashed,color=gray] (3,-0.0125)--(3,-0.3125);
\draw [very thick][domain=1.9:3.1] plot(\x,{0.25-((\x-1.5)^(2))/4});

\draw [very thick](2.5,0) node[below=6pt]{$z$} node{$\bullet$};

\draw [very thick](2.5,0.15) node{$V^{\varepsilon,\eta}_{-,j}(z)$};

\draw [thick,sloped](2.5,0.45)--(2.5,0.65) [->];

\filldraw[draw=white, fill=gray!50]
(2,1.25)--(3,1.25)--(3,0.95)--(2,0.95)--cycle;

\draw [dashed,color=gray] (2,1.25)--(3,1.25)--(3,0.95)--(2,0.95)--cycle;
\draw [very thick](1.9,0.95)--(3.1,0.95) node[midway,below=6pt]{$T(\tilde{x})$} node[midway]{$\bullet$};
\draw [thick](2,0.95) node{$|$} node[below=6pt]{$T(\tilde{x})-\varepsilon$};
\draw [thick](3,0.95) node{$|$} node[below=6pt]{$T(\tilde{x})+\varepsilon$};

\end{tikzpicture}
\end{center}

Note that the set $V^{\varepsilon,\eta}(z)=\pi(V^{\varepsilon,\eta}_{+,j}(z)\cup V^{\varepsilon,\eta}_{-,j}(z))$ is an open neighbourhood of $z$. Moreover, the application $\psi^{\varepsilon,\eta}_{z}:V^{\varepsilon,\eta}(z)\vers \R^{2}$, defined by
$$\psi^{\varepsilon,\eta}_{z}([x,y])=
\begin{cases}
(x,y-r(x)), & \text{if }(x,y)\in V^{\varepsilon,\eta}_{+,j}(z) \\
(x,y+r(x)), & \text{if }(Tx,y)\in V^{\varepsilon,\eta}_{-,j}(z),
\end{cases}
$$
defines a local chart around $z$ that we will call \emph{second-kind local chart}.\\
It remains to show that all transition maps between local charts are smooth. It is straightforward for two local charts of the first-kind or two local charts of second-kind. In both cases we obtain the identity as transition map. Consider a local chart $\psi_{1}$ of first-kind and a local chart $\psi_{2}$ of second-kind. The transition map $\psi_{1}\circ \psi_{2}^{-1}$ will be of the form $(x,y)\mapsto (x,y+r(x))$ or $(x,y)\mapsto (Tx,y-r(x))$ depending on the domain $V^{\varepsilon,\eta}_{+,j}(z)$ or $V^{\varepsilon,\eta}_{-,j}(z)$ that we take. If we consider now the transition map $\psi_{2}\circ \psi_{1}^{-1}$, it will be of the form $(x,y)\mapsto (x,y-r(x))$ or $(x,y)\mapsto (T^{-1}x,y-r(T^{-1}x))$ depending also on the same domains. In both cases, the transition maps are smooth since $r$ and $T$ are smooth on their respective domains.

\end{proof}

Recall that $M^{\ast}$ is the image of the set $N=\{(x,y)\in \R^{2}: \ x\in ]0,1[, \ -r(T^{-1}x)< y< r(x)\}$ under the projection map $\pi$. Since $\pi|_{N}$ is a homeomorphism, the Euclidean metric $\tilde{g}^{e}$ on $\R^{2}$ induces a natural Riemannian metric $g^{e}=(\pi|_{N}^{-1})^{\ast}\tilde{g}^{e}$ on $M^{\ast}$. This Euclidean metric $g^{e}$ cannot be extended to the whole manifold $M$ because the local charts around the points of $F$, where $r$ is not locally constant, cause distortion of the Euclidian metric.

\begin{definition} Let $M$ be a smooth manifold. Let $g^{1}$ and $g^{2}$ be two Riemannian metrics on $M$. We say that $g^{1}$ and $g^{2}$ are \textit{pointwise} equivalent if for all $p\in M$ there exists a constant $C(p)\geq 1$ such that for all $v\in T_{p}M$,
$$C(p)^{-1}\leq \frac{g^{1}_{p}(v,v)}{g^{2}_{p}(v,v)}\leq C(p).$$
\end{definition}

\begin{prop}\label{ExistenceRiemMetricM} The smooth manifold $M=M(T,r)$ admits a Riemannian metric $g$, which is pointwise equivalent to the Euclidian metric $g^{e}$ in restriction to $M^{\ast}$.
\end{prop}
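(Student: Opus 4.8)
The plan is to build $g$ by the standard partition-of-unity construction adapted to the atlas of Proposition \ref{ExistenceDiffStructureM}, and then to read off the pointwise equivalence on $M^\ast$ from an explicit computation of the metric distortion produced by the second-kind charts. First I would fix the atlas $\{(U_k,\psi_k)\}$ consisting of all first-kind and second-kind local charts, which covers $M$. Since $M$ is a smooth manifold it is paracompact, so there is a smooth partition of unity $\{\rho_k\}$ subordinate to $\{U_k\}$. On each $U_k$ set $g_k=\psi_k^\ast\tilde g^e$, the pullback of the Euclidean metric of $\R^2$, and define $g=\sum_k\rho_k\, g_k$. This is a smooth Riemannian metric on $M$. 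Observe that for a first-kind chart $\psi_k=\pi^{-1}$ one has $g_k=g^e$ exactly, by the very definition of $g^e$; hence all the distortion relative to $g^e$ comes from the second-kind charts, and this is the only thing left to control on $M^\ast$.

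The key step is the computation of $g_k$ for a second-kind chart. On the ``$+$'' piece the chart reads $\psi_k([x,y])=(x,y-r(x))$, so its inverse is $(u,v)\mapsto(u,v+r(u))$ and, writing the pullback of $du^2+dv^2$ in the ambient coordinates $(x,y)$ of $M^\ast$, one gets $g_k=dx^2+(dy-r'(x)\,dx)^2$ (and $dx^2+(dy+r'(x)\,dx)^2$ on the ``$-$'' piece). With respect to $g^e=dx^2+dy^2$ the Gram matrix of $g_k$ is $\bigl(\begin{smallmatrix} 1+r'(x)^2 & -r'(x)\\ -r'(x)&1\end{smallmatrix}\bigr)$, of determinant $1$ and trace $2+r'(x)^2$; its eigenvalues are therefore $\lambda(x)^{\pm1}$ with $\lambda(x)+\lambda(x)^{-1}=2+r'(x)^2$ and $\lambda(x)\geq1$. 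Consequently, at each point $p=[x,y]$ covered by such a chart,
$$\lambda(x)^{-1}\,g^e_p(v,v)\leq (g_k)_p(v,v)\leq \lambda(x)\,g^e_p(v,v)\qquad\text{for all }v\in T_pM,$$
so $g_k$ is pointwise equivalent to $g^e$ there. The constant $\lambda(x)$ is finite exactly because $r$ is smooth on $I\setminus S$, so $r'(x)$ is finite; the condition $\varepsilon<\tfrac12\min\{|\tilde x-x_j|,|\tilde x-x_{j+1}|\}$ built into the chart keeps $x$ away from the singular set $S$, where $r$ and $r'$ blow up.

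Finally I would assemble the estimate. At a point $p=[x,y]\in M^\ast$ only finitely many $\rho_k$ are nonzero, and $g_p=\sum_k\rho_k(p)(g_k)_p$ is a convex combination of quadratic forms each of which is pointwise equivalent to $g^e_p$ — with constant $1$ for first-kind charts and constant $\lambda(x)$ for second-kind charts. Since the weights $\rho_k(p)\geq0$ sum to $1$ and each active constant $C_k\geq1$, the combination satisfies $C(p)^{-1}g^e_p\leq g_p\leq C(p)\,g^e_p$ with $C(p)=\max_k C_k=\lambda(x)$, the maximum taken over the finitely many active charts; this proves that $g$ is pointwise equivalent to $g^e$ on $M^\ast$.

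The routine parts are the existence of the partition of unity and the convexity bookkeeping of the last paragraph; the only genuine point is the middle step, namely that the distortion introduced along $F$ is finite at every individual point. This is precisely what smoothness of $r$ on $I\setminus S$ guarantees, and it is also where the statement's restriction to a \emph{pointwise} constant $C(p)$ is essential: a uniform bound would fail because $r'\to\infty$ as $x\to S$, but the pointwise constant $\lambda(x)$ remains finite throughout $M^\ast$.
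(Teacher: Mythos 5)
Your proof is correct and follows essentially the same route as the paper: there, too, the metric is built by blending $g^{e}$ with the pullbacks of the Euclidean metric under the second-kind charts via a partition of unity (the paper uses the explicit two-function blend $g^{\delta}=\rho_{\delta}g^{e}+(1-\rho_{\delta})h^{\delta}$ supported on a collar $R^{\delta}$ of $F$), and the pointwise equivalence is likewise read off from the distortion of those charts, controlled by $r'$ on $I\setminus S$. The only differences are cosmetic: you use a generic subordinate partition of unity and the exact Gram-matrix eigenvalue $\lambda(x)$ (with $\lambda+\lambda^{-1}=2+r'(x)^{2}$), where the paper uses its explicit bump function and the cruder operator-norm bound $C(z)=2+2|r'(x)|$.
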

\begin{proof} Let $z\in F$ and choose some $\varepsilon>0$ and $0<\delta<1/2$ such that the second-kind local chart around $z$ is well defined. Define a Riemannian metric $h^{\delta}$ on $V^{\varepsilon,\delta}(z)$ by $h^{\delta}=(\psi^{\varepsilon,\delta}_{z})^{\ast}\tilde{g}^{e}$. This Riemannian metric is well defined since the transition map between two local charts of second-kind is the identity. Let $R^{\delta}$ be the set defined by
$$R^{\delta}=\{[x,y]\in M:x\in I\setminus S, -r(T^{-1}x)<y<-r(T^{-1}x)+\delta \textmd{ or }r(x)-\delta < y \leq r(x)\}.$$
We remark that $R^{\delta}$ is the set of all the points contained in some $V^{\delta,\varepsilon}(w)$, with $w\in F$. Choose now a smooth function $\rho_{\delta}:M\vers [0,1]$ such that $\rho_{\delta}|_{M\setminus R^{\delta}}\equiv 1$, $\rho_{\delta}|_{F}\equiv 0$ and $0<\rho_{\delta}<1$ otherwise. The metric $g^{\delta}$ defined by $g^{\delta}=\rho_{\delta}g^{e}+(1-\rho_{\delta})h^{\delta}$ is by construction a Riemannian metric. It coincides with the Euclidean metric $g^{e}$ on $M\setminus R^{\delta}$.

\begin{lemma}\label{compmetrics} The Riemannian metrics $g^{e}$ and $g^{\delta}$ are pointwise equivalent on $M^{\ast}$.
\end{lemma}
\begin{proof} Let $z=[x,y]\in M^{\ast}$. We denote by $\|\cdot\|^{\delta}_{z}$ (resp. $\|\cdot\|^{e}_{z}$)\footnote{Do not confuse the norm $\|\cdot\|^{\delta}$ with a power of the norm $\|\cdot\|$.} the norm induced by $g^{\delta}$ (resp. $g^{e}$) on $T_{z}M$. Let $\tilde{z}\in F$ and $\varepsilon,\delta>0$ such that $\psi^{\varepsilon,\delta}_{\tilde{z}}$ is well defined. We denote by $\|d_{z}\psi^{\varepsilon,\delta}_{\tilde{z}}\|$ the operator norm of the map $d_{z}\psi^{\varepsilon,\delta}_{\tilde{z}}:(T_{z}M,g^{e}_{z})\vers (\R^{2},\tilde{g}^{e})$. Then, if $z\in V^{\varepsilon,\delta}_{+,j}(\tilde{z})$ in local coordinates we have
$$ d_{z}\psi^{\varepsilon,\delta}_{\tilde{z}}=\left(
  \begin{array}{cc}
    1 & 0 \\
    -r'(x) & 1 \\
  \end{array}
\right), \ d_{\psi^{\varepsilon,\delta}_{\tilde{z}}z}(\psi^{\varepsilon,\delta}_{\tilde{z}})^{-1}=\left(
  \begin{array}{cc}
    1 & 0 \\
    r'(x) & 1 \\
  \end{array}
\right).$$
On the other hand, if $z\in V^{\varepsilon,\delta}_{-,j}(\tilde{z})$, in local coordinates we have
$$ d_{z}\psi^{\varepsilon,\delta}_{\tilde{z}}=\left(
  \begin{array}{cc}
    1 & 0 \\
    r'(T^{-1}x) & 1 \\
  \end{array}
\right), \ d_{\psi^{\varepsilon,\delta}_{\tilde{z}}z}(\psi^{\varepsilon,\delta}_{\tilde{z}})^{-1}=\left(
  \begin{array}{cc}
    1 & 0 \\
    -r'(T^{-1}x) & 1 \\
  \end{array}
\right).$$

Recall that in $\R^{n}$ the Euclidian norm $\|\cdot\|_{2}$ is comparable with the maximum norm $\|\cdot\|_{\infty}$ by
$$\|\cdot\|_{\infty}\leq \|\cdot\|_{2}\leq n\|\cdot\|_{\infty}.$$
Since $M$ is a $2$-dimensional manifold, we can check that  $1\leq \|d_{z}\psi^{\varepsilon,\delta}_{\tilde{z}}\|\leq 2(1+|r'(x)|)$ and $1\leq \|d_{\psi^{\varepsilon,\delta}_{\tilde{z}}z}(\psi^{\varepsilon,\delta}_{\tilde{z}})^{-1}\|\leq 2(1+|r'(x)|)$. Thus, for all $v\in T_{z}M$, we have
\begin{eqnarray*}
(\|v\|^{\delta}_{z})^{2}&=& g^{\delta}_{z}(v,v)= \rho_{\delta}(z)g^{e}_{z}(v,v)+(1-\rho_{\delta})h^{\delta}(v,v)\\
&=& \rho_{\delta}(z)g^{e}_{z}(v,v)+(1-\rho_{\delta})g^{e}(d_{z}\psi^{\varepsilon,\delta}_{\tilde{z}}(v),d_{z}\psi^{\varepsilon,\delta}_{\tilde{z}}(v))\\
&\leq& \rho_{\delta}(z)g^{e}_{z}(v,v)+(1-\rho_{\delta}) \|d_{z}\psi^{\varepsilon,\delta}_{\tilde{z}}\|^{2}g^{e}_{z}(v,v)\\
&\leq& \|d\psi^{\varepsilon,\delta}_{\tilde{z}}\|^{2}(\|v\|^{e}_{z})^{2},
\end{eqnarray*}
and
\begin{eqnarray*}
(\|v\|^{e}_{z})^{2}&=& (\|(d_{z}\psi^{\varepsilon,\delta}_{\tilde{z}})^{-1}d_{z}\psi^{\varepsilon,\delta}_{\tilde{z}}(v)\|^{e}_{z})^{2}\\
&\leq& \|(d_{z}\psi^{\varepsilon,\delta}_{\tilde{z}})^{-1}\|^{2}(\|d_{z}\psi^{\varepsilon,\delta}_{\tilde{z}}(v)\|^{e}_{z})^{2}\\
&=& \|(d_{z}\psi^{\varepsilon,\delta}_{\tilde{z}})^{-1}\|^{2}(\|v\|^{\delta}_{z})^{2}.
\end{eqnarray*}
For $z=[x,y]\in M^{\ast}$ define $C(z)$ by $C(z)=(2+2|r'(x)|)$. It follows that for all $v\in T_{z}M$, we have
\begin{gather}\label{comparaisonmetriques}
C(z)^{-1}\|v\|^{e}_{z}\leq \|v\|^{\delta}_{z} \leq C(z)\|v\|^{e}_{z}.
\end{gather}
\end{proof}
\noindent
This concludes the proof of Proposition \ref{ExistenceRiemMetricM}.
\end{proof}

Observe that the constant $C$ introduced in the proof of Lemma \ref{compmetrics} is not optimal. In fact, when $r$ is locally constant, both metrics locally coincide.

\subsection{The diffeomorphism} From now on $(M,g^{\delta})$ is the Riemannian manifold constructed above. The \emph{suspension flow} $(\phi^{t})$ on $M$ is defined as follows. For all $t\in \R$ we define $\phi^{t}:M\vers M$ by $\phi^{t}([x,y])=[x,y+t]$. The unit map $\phi^{1}=\phi$ of the suspension flow satisfies
$$\phi([x,y])=
\begin{cases}
[x,y+1] & \text{if }y+1<r(x) \\
[Tx,y+1-2r(x)] & \text{if }y+1\geq r(x).
\end{cases}
$$

\begin{center}
\begin{tikzpicture}[scale=4]
\coordinate (A) at (0,0);
\coordinate (B) at (1,0);
\coordinate (X1) at (0.2,0);
\coordinate (X2) at (0.3,0);
\coordinate (X3) at (0.6,0);
\coordinate (X4) at (0.85,0);
\coordinate (X5) at (0.9,0);
\coordinate (X6) at (0.94,0);
\coordinate (X7) at (0.97,0);

\coordinate (TX1) at (0.3,0);
\coordinate (TX2) at (0.5,0);
\coordinate (TX3) at (0.75,0);
\coordinate (TX4) at (0.85,0);
\coordinate (TX5) at (0.89,0);
\coordinate (TXBLA) at (0.9,0);

\draw [color=white](A)--(B) node[color=black,midway,below=90pt]{$\textbf{The flow } (\phi^{t})$};

\clip (0,-2/2.7) rectangle (1,2/2.7);

\draw [very thick][domain=0.01:0.025]plot(\x,{(1-ln((\x-0)/0.05))/2.7});
\draw [very thick][domain=0.025:0.05]plot(\x,{((exp(1)*exp(-1/(1-((\x-(0+0.025))/(0.025))^(2))))*(1-ln((\x-0)/0.05))+(1-exp(1)*exp(-1/(1-((\x-(0+0.025))/(0.025))^(2)))))/2.7});
\draw [very thick][domain=0.05:0.15]plot(\x,1/2.7);
\draw [very thick][domain=0.15:0.175]plot(\x,{((1-exp(1)*exp(-1/(1-((\x-(0.2-0.05))/(0.025))^(2))))*(1-ln((0.2-\x)/(0.05)))+exp(1)*exp(-1/(1-((\x-(0.2-0.05))/(0.025))^(2))))/2.7});
\draw [very thick][domain=0.175:0.1901]plot(\x,{(1-ln((0.2-\x)/(0.05)))/2.7});

\draw [very thick][domain=0.205:0.212]plot(\x,{(1-ln((\x-0.2)/0.025))/2.7});
\draw [very thick][domain=0.212:0.225]plot(\x,{((exp(1)*exp(-1/(1-((\x-(0.2+0.012))/(0.013))^(2))))*(1-ln((\x-0.2)/0.025))+(1-exp(1)*exp(-1/(1-((\x-(0.2+0.012))/(0.013))^(2)))))/2.7});
\draw [very thick][domain=0.225:0.275]plot(\x,1/2.7);
\draw [very thick][domain=0.275:0.288]plot(\x,{((1-exp(1)*exp(-1/(1-((\x-(0.3-0.025))/(0.013))^(2))))*(1-ln((0.3-\x)/(0.025)))+exp(1)*exp(-1/(1-((\x-(0.3-0.025))/(0.013))^(2))))/2.7});
\draw [very thick][domain=0.288:0.295]plot(\x,{(1-ln((0.3-\x)/(0.025)))/2.7});

\draw [very thick][domain=0.32:0.35]plot(\x,{(1-ln((\x-0.3)/0.1))/2.7});
\draw [very thick][domain=0.35:0.4]plot(\x,{((exp(1)*exp(-1/(1-((\x-(0.3+0.05))/(0.05))^(2))))*(1-ln((\x-0.3)/0.1))+(1-(exp(1)*exp(-1/(1-((\x-(0.3+0.05))/(0.05))^(2))))))/2.7});
\draw [very thick][domain=0.4:0.5]plot(\x,1/2.7);
\draw [very thick][domain=0.5:0.55]plot(\x,{((1-exp(1)*exp(-1/(1-((\x-(0.6-0.1))/(0.05))^(2))))*(1-ln((0.6-\x)/(0.1)))+exp(1)*exp(-1/(1-((\x-(0.6-0.1))/(0.05))^(2))))/2.7});
\draw [very thick][domain=0.55:0.58]plot(\x,{(1-ln((0.6-\x)/(0.1)))/2.7});

\draw [very thick][domain=0.61:0.625]plot(\x,{(1-ln((\x-0.6)/0.05))/2.7});
\draw [very thick][domain=0.625:0.65]plot(\x,{((exp(1)*exp(-1/(1-((\x-(0.6+0.025))/(0.025))^(2))))*(1-ln((\x-0.6)/0.05))+ (1-exp(1)*exp(-1/(1-((\x-(0.6+0.025))/(0.025))^(2)))) )/2.7});
\draw [very thick][domain=0.65:0.8]plot(\x,1/2.7);
\draw [very thick][domain=0.8:0.825]plot(\x,{((1-exp(1)*exp(-1/(1-((\x-(0.85-0.05))/(0.025))^(2))))*(1-ln((0.85-\x)/(0.05)))+exp(1)*exp(-1/(1-((\x-(0.85-0.05))/(0.025))^(2))))/2.7});
\draw [very thick][domain=0.825:0.8401]plot(\x,{(1-ln((0.85-\x)/(0.05)))/2.7});

\draw [very thick][domain=0.02:0.05]plot(\x,{-(1-ln((\x-0)/0.1))/2.7});
\draw [very thick][domain=0.05:0.1]plot(\x,{-((exp(1)*exp(-1/(1-((\x-(0+0.05))/(0.05))^(2))))*(1-ln((\x-0)/0.1))+(1-(exp(1)*exp(-1/(1-((\x-(0+0.05))/(0.05))^(2))))))/2.7});
\draw [very thick][domain=0.1:0.2]plot(\x,-1/2.7);
\draw [very thick][domain=0.2:0.25]plot(\x,{-((1-exp(1)*exp(-1/(1-((\x-(0.3-0.1))/(0.05))^(2))))*(1-ln((0.3-\x)/(0.1)))+exp(1)*exp(-1/(1-((\x-(0.3-0.1))/(0.05))^(2))))/2.7});
\draw [very thick][domain=0.25:0.28]plot(\x,{-(1-ln((0.3-\x)/(0.1)))/2.7});

\draw [very thick][domain=0.31:0.325]plot(\x,{-(1-ln((\x-0.3)/0.05))/2.7});
\draw [very thick][domain=0.325:0.35]plot(\x,{-((exp(1)*exp(-1/(1-((\x-(0.3+0.025))/(0.025))^(2))))*(1-ln((\x-0.3)/0.05))+(1-exp(1)*exp(-1/(1-((\x-(0.3+0.025))/(0.025))^(2)))))/2.7});
\draw [very thick][domain=0.35:0.45]plot(\x,-1/2.7);
\draw [very thick][domain=0.45:0.475]plot(\x,{-((1-exp(1)*exp(-1/(1-((\x-(0.5-0.05))/(0.025))^(2))))*(1-ln((0.5-\x)/(0.05)))+exp(1)*exp(-1/(1-((\x-(0.5-0.05))/(0.025))^(2))))/2.7});
\draw [very thick][domain=0.475:0.4901]plot(\x,{-(1-ln((0.5-\x)/(0.05)))/2.7});

\draw [very thick][domain=0.51:0.525]plot(\x,{-(1-ln((\x-0.5)/0.05))/2.7});
\draw [very thick][domain=0.525:0.55]plot(\x,{-((exp(1)*exp(-1/(1-((\x-(0.5+0.025))/(0.025))^(2))))*(1-ln((\x-0.5)/0.05))+ (1-exp(1)*exp(-1/(1-((\x-(0.5+0.025))/(0.025))^(2)))) )/2.7});
\draw [very thick][domain=0.55:0.7]plot(\x,-1/2.7);
\draw [very thick][domain=0.7:0.725]plot(\x,{-((1-exp(1)*exp(-1/(1-((\x-(0.75-0.05))/(0.025))^(2))))*(1-ln((0.75-\x)/(0.05)))+exp(1)*exp(-1/(1-((\x-(0.75-0.05))/(0.025))^(2))))/2.7});
\draw [very thick][domain=0.725:0.7401]plot(\x,{-(1-ln((0.75-\x)/(0.05)))/2.7});

\draw [very thick][domain=0.755:0.762]plot(\x,{-(1-ln((\x-0.75)/0.025))/2.7});
\draw [very thick][domain=0.762:0.775]plot(\x,{-((exp(1)*exp(-1/(1-((\x-(0.75+0.012))/(0.013))^(2))))*(1-ln((\x-0.75)/0.025))+(1-exp(1)*exp(-1/(1-((\x-(0.75+0.012))/(0.013))^(2)))))/2.7});
\draw [very thick][domain=0.775:0.825]plot(\x,-1/2.7);
\draw [very thick] [domain=0.825:0.838]plot(\x,{-((1-exp(1)*exp(-1/(1-((\x-(0.85-0.025))/(0.013))^(2))))*(1-ln((0.85-\x)/(0.025)))+exp(1)*exp(-1/(1-((\x-(0.85-0.025))/(0.013))^(2))))/2.7});
\draw [very thick][domain=0.838:0.845]plot(\x,{-(1-ln((0.85-\x)/(0.025)))/2.7});

\draw[color=gray!80] [loosely dashed](0,-1)--(0,1);
\draw[color=gray!80] [loosely dashed](1,-1)--(1,1);

\draw (0.85,1/2.7) node{$\cdot$};
\draw (0.9,1/2.7) node{$\cdot$};
\draw (0.95,1/2.7) node{$\cdot$};

\draw (0.87,-1/2.7) node{$\cdot$};
\draw (0.92,-1/2.7) node{$\cdot$};
\draw (0.97,-1/2.7) node{$\cdot$};

\draw (0.62,0.1) node{$\bullet$} node[below]{$z$};
\draw (0.83,-0.15) node{$\bullet$} node[above]{$\phi^{t}(z)$};

\draw [thick,densely dotted](0.62,0.1)--(0.62,0.685) node[midway]{$\uparrow$};
\draw [thick,densely dotted](0.52,-0.685)--(0.52,0.39) node[midway]{$\uparrow$};
\draw [thick,densely dotted](0.22,-0.388)--(0.22,1/2.7+0.03) node[midway]{$\uparrow$};
\draw [thick,densely dotted](0.83,-1/2.7-0.03)--(0.83,-0.15) node[midway,]{$\uparrow$};

\end{tikzpicture}
\end{center}

\noindent\\

\begin{prop}\label{diffunit} The map $\phi:M\vers M$ is smooth.
\end{prop}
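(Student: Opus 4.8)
The plan is to verify smoothness locally, using the atlas of first- and second-kind charts constructed in Proposition \ref{ExistenceDiffStructureM}. The guiding observation is that in the Euclidean (first-kind) charts the suspension flow $\phi^{t}([x,y])=[x,y+t]$ is simply the vertical translation $(x,y)\mapsto(x,y+t)$, while the second-kind charts are precisely designed to ``flatten'' the ceiling $F$, so that an orbit crossing $F$ is again a translation in the flattened height coordinate $v=y-r(x)$. Since $r\geq 1$ on $I\setminus S$ (immediate from the construction, as $f_{i}\geq 1$ wherever $\alpha_{i}>0$, while $r_{i}\equiv 1$ on $I_{i,3}$), the time-$1$ map meets $F$ at most once along each orbit segment, so the two-line formula for $\phi$ displayed above is exhaustive. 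Thus it suffices, for each $z\in M$ with $w=\phi(z)$, to exhibit a chart around $z$ and a chart around $w$ in which $\phi$ has a smooth expression.

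First I would treat the case $z,w\in M^{\ast}$, using Euclidean charts at both points. If $y+1<r(x)$ near $z$, then $\phi$ reads $(x,y)\mapsto(x,y+1)$; if $y+1>r(x)$ near $z$, it reads $(x,y)\mapsto(Tx,y+1-2r(x))$. In either case $x\notin S$ (since $r(x)<\infty$ wherever a crossing occurs), and both $T$ and $r$ are smooth off $S$, so the expression is smooth. Next I would treat the boundary situations $w\in F$ or $z\in F$. If $w=[x_{0},r(x_{0})]\in F$ (forcing $z\in M^{\ast}$ with $y_{0}+1=r(x_{0})$), I use a Euclidean chart at $z$ and the second-kind chart $(x,v)$ at $w$; a short check shows that both nearby points with $y+1<r(x)$ (mapped below the ceiling) and those with $y+1\geq r(x)$ (mapped above the floor of the next fibre) acquire the \emph{same} flattened height $v=(y+1)-r(x)$, so $\phi$ becomes $(x,y)\mapsto(x,\,y+1-r(x))$, which is smooth. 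Symmetrically, if $z\in F$ I use the second-kind chart $(x,v)$ at $z$ and a Euclidean chart at $w\in M^{\ast}$; the upper representative $(x,v+r(x))$ and the lower representative $(Tx,v-r(x))$ both flow to the Euclidean point $(Tx,\,v-r(x)+1)$, giving the smooth expression $(x,v)\mapsto(Tx,\,v-r(x)+1)$.

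The crux of the argument, and the only place beyond routine translation, lies in these last two cases, where an orbit actually crosses the ceiling. The main thing to verify is that the two branches in the definition of $\phi$ (below the ceiling, and above the floor after the jump $y\mapsto y+1-2r(x)$) glue into a single smooth map once read in a second-kind chart: the $-2r(x)$ shift built into the gluing and the flattening substitution $v=y-r(x)$ are engineered to cancel, leaving $\phi$ as a translation plus a smooth function of $x$. Along the way I would confirm the bookkeeping that keeps everything in range: $r\geq 1$ guarantees exactly one crossing and that the image lands inside the next fibre (one has $y+1-2r(x)\in(-r(x),0]$, where $-r(x)$ is precisely the floor of the fibre over $Tx$, and $0<r(Tx)$), and that whenever a crossing occurs the base point lies in $I\setminus S$, so that $r$ and $T$ are genuinely smooth there. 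With these checks the transition expressions are smooth in every chart, and $\phi$ is smooth.
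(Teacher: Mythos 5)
Your proposal is correct and follows essentially the same route as the paper: both check smoothness chart by chart in the atlas of first- and second-kind charts, arrive at the same four coordinate expressions $(x,y)\mapsto(x,y+1)$, $(x,y)\mapsto(Tx,y+1-2r(x))$, $(x,y)\mapsto(x,y+1-r(x))$, $(x,y)\mapsto(Tx,y+1-r(x))$, and invoke $r\geq 1$ to rule out the remaining combinations (in particular second-kind to second-kind). The only cosmetic difference is that you organize the cases by the position of $z$ and $\phi(z)$ relative to $M^{\ast}$ and $F$ rather than by chart types, which amounts to the same enumeration; your extra bookkeeping remarks (single crossing, crossings occur only over $I\setminus S$) are correct and slightly more explicit than the paper's.
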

\begin{proof} Recall that the map $\phi$ is smooth if, for all local charts $\psi_{\alpha}$ and $\psi_{\beta}$, the map $\phi_{\alpha,\beta}=\psi_{\alpha}\circ \phi \circ \psi^{-1}_{\beta}$ is smooth whenever it is defined. If $\psi_{\alpha}$ and $\psi_{\beta}$ are local charts of first-kind, the map $\phi_{\alpha,\beta}$ is equal to $(x,y)\mapsto (x,y+1)$ or $(x,y)\mapsto (Tx,y+1-2r(x))$. If $\psi_{\alpha}$ is a first-kind local chart and $\psi_{\beta}$ is a second-kind local chart, the map $\phi_{\alpha,\beta}$ is equal to $(x,y)\mapsto (Tx,y+1-r(x))$. If $\psi_{\alpha}$ is a second-kind local chart and $\psi_{\beta}$ is a first-kind local chart, the map $\phi_{\alpha,\beta}$ is equal to $(x,y)\mapsto (x,y+1-r(x))$. Since $r\geq 1$, there are no more possibilities for the map $\phi_{\alpha,\beta}$. The regularity of $r$ and $T$ implies the conclusion of the proposition.
\end{proof}

Let $\|d\phi\|^{\delta}$ be the operator norm of $d\phi$ with respect to the norm $g^{\delta}$. When $z\in M^{\ast}\cap \phi^{-1}(M^{\ast})$, consider also the operator norm $\|d_{z}\phi\|^{e}$ of $d\phi$ with respect to the Euclidean norm $g^{e}$. Next proposition says that we can compare these operator norms by an explicit ``nice'' function. This fact will be fundamental in section 4.

\begin{prop}\label{OselHyp} There exists an explicit measurable function $\beta:M^{\ast}\cap \phi^{-1}(M^{\ast})\vers\R^{+}$, defined in (\ref{betadez}), such that for all $z\in M^{\ast}\cap \phi^{-1}(M^{\ast})$, we have
\begin{gather}\label{InOp}
\|d_{z}\phi\|^{\delta}\leq \beta(z)\|d_{z}\phi\|^{e}.
\end{gather}
\end{prop}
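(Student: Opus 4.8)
The plan is to derive (\ref{InOp}) directly from the pointwise comparison of Lemma \ref{compmetrics}, applied at the two endpoints of the linear map $d_z\phi$. The key observation is that $\|d_z\phi\|^\delta$ and $\|d_z\phi\|^e$ are operator norms of the \emph{same} linear map $d_z\phi\colon T_zM\vers T_{\phi(z)}M$, but measured with different metrics; the hypothesis $z\in M^*\cap\phi^{-1}(M^*)$ is exactly what ensures that both $z$ and $\phi(z)$ lie in $M^*$, so that inequality (\ref{comparaisonmetriques}) is available at each of these two points.

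First I would fix $z=[x,y]\in M^*\cap\phi^{-1}(M^*)$ and a nonzero $v\in T_zM$. Applying the upper bound in (\ref{comparaisonmetriques}) at the point $\phi(z)$ to the vector $d_z\phi(v)$ gives $\|d_z\phi(v)\|^\delta_{\phi(z)}\leq C(\phi(z))\,\|d_z\phi(v)\|^e_{\phi(z)}$, while the lower bound in (\ref{comparaisonmetriques}) at $z$ gives $\|v\|^\delta_z\geq C(z)^{-1}\|v\|^e_z$. Combining the two and taking the supremum over $v\neq 0$ yields
$$\|d_z\phi\|^\delta\;=\;\sup_{v\neq 0}\frac{\|d_z\phi(v)\|^\delta_{\phi(z)}}{\|v\|^\delta_z}\;\leq\;C(z)\,C(\phi(z))\sup_{v\neq 0}\frac{\|d_z\phi(v)\|^e_{\phi(z)}}{\|v\|^e_z}\;=\;C(z)\,C(\phi(z))\,\|d_z\phi\|^e.$$
This already gives (\ref{InOp}) with $\beta(z)=C(z)C(\phi(z))$, and it only remains to render this quantity explicit.

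Since $C([u,w])=2+2|r'(u)|$ depends only on the base point, I would read off the base point of $\phi(z)$ from the definition of $\phi$: it equals $x$ when $y+1<r(x)$ and equals $Tx$ when $y+1\geq r(x)$. Recalling that $C(z)=2+2|r'(x)|$, this leads to the explicit formula
\begin{equation}\label{betadez}
\beta(z)=\begin{cases}(2+2|r'(x)|)^2 & \text{if } y+1<r(x),\\[2pt] (2+2|r'(x)|)\,(2+2|r'(Tx)|) & \text{if } y+1\geq r(x).\end{cases}
\end{equation}
Measurability of $\beta$ is then immediate: $r'$ is smooth off the Lebesgue-null set $S$, the map $T$ is measurable, and the two branches are separated by the measurable condition $y+1<r(x)$.

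I do not expect a genuine obstacle here, since the analytic content is entirely supplied by Lemma \ref{compmetrics}. The only points demanding care are bookkeeping ones: keeping track of the fact that $d_z\phi$ maps between two \emph{distinct} tangent spaces, so that the comparison constant must be evaluated at both $z$ and $\phi(z)$, and correctly extracting the base point of $\phi(z)$ from the two branches defining $\phi$ in order to obtain the closed form (\ref{betadez}). The shape of this $\beta$, controlled by $|r'|$ at $x$ and at $Tx$, is precisely what will make the integrability estimates of section 4 tractable.
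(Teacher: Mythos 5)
Your chain of inequalities is exactly the one in the paper: apply the upper bound of (\ref{comparaisonmetriques}) at $\phi(z)$ to the vector $d_{z}\phi(v)$, then the definition of the Euclidean operator norm, then the lower bound of (\ref{comparaisonmetriques}) at $z$, yielding $\|d_{z}\phi\|^{\delta}\leq C(z)\,C(\phi(z))\,\|d_{z}\phi\|^{e}$; your identification of the base point of $\phi(z)$ in the two branches of $\phi$ is the same bookkeeping the paper does (the paper merely majorizes $C(\phi(z))$ by $\max\{2+2|r'(x)|,\,2+2|r'(Tx)|\}$ instead of splitting into cases). So your argument does prove the stated inequality with an explicit measurable $\beta$.

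However, your formula for $\beta$ omits the one refinement that the paper builds into (\ref{betadez}) and that is the actual point of making $\beta$ explicit. Since $g^{\delta}$ and $g^{e}$ coincide on $M\setminus R^{\delta}$, the paper sets $\beta\equiv 1$ on the set $K_{\delta}=\{[x,y]:-r(T^{-1}x)+\delta<y<r(x)-(1+\delta)\}$, on which both $z$ and $\phi(z)$ avoid the gluing strip $R^{\delta}$, and keeps the product of comparison constants only on $M\setminus K_{\delta}$, whose fibers have height at most $1+2\delta$. This is not cosmetic. In Lemma \ref{lemma2} one integrates $\log^{+}\beta$ over fibers of length $r(x)+r(T^{-1}x)$, which is unbounded; with the paper's $\beta$ the integral collapses, via the $T$-invariance of Lebesgue measure, to $3(1+2\delta)\int\log^{+}(h)\,dm$, finite as soon as $-\sum_{i}b_{i}\log b_{i}<\infty$. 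With your $\beta$ one has $\log^{+}\beta\geq\log 4$ on the whole fiber, and the estimate produces the coupled term $\int_{0}^{1}r(T^{-1}x)\log(2+2|r'(x)|)\,dx$; when a logarithmic singularity of $r\circ T^{-1}$ (at a point $Tx_{j}$) sits at or near a singularity of $r'$ (at some $x_{i}$), this term contributes quantities of order $b_{i}(\log b_{i})^{2}$, which the hypothesis $-\sum_{i}b_{i}\log b_{i}<\infty$ does not control. So your closing remark that your $\beta$ is ``precisely what will make the integrability estimates of section 4 tractable'' is unjustified as stated: one would either need to strengthen the condition on the $b_{i}$'s (e.g.\ to $\sum_{i}b_{i}(\log b_{i})^{2}<\infty$), or, better, include the factor $\textbf{1}_{M\setminus K_{\delta}}$ as the paper does.
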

\begin{proof} Using computations in the proof Lemma \ref{compmetrics} for all $z\in M^{\ast}\cap \phi^{-1}(M^{\ast})$ and for all $v\in T_{z}M$, we obtain
\begin{eqnarray*}
\|d_{z}\phi(v)\|^{\delta}_{\phi z}&\leq& C(\phi(z))\|d_{z}\phi(v)\|^{e}_{\phi z}\\
&\leq& C(\phi(z))\|d_{z}\phi\|^{e}\|v\|^{e}_{z}\\
&\leq& C(\phi(z))C(z)\|d_{z}\phi\|^{e}\|v\|^{\delta}_{z}\\
&\leq& (2+2|r'(x)|)\max\{(2+2|r'(x)|),(2+2|r'(Tx)|)\}\|d_{z}\phi\|^{e}\|v\|^{\delta}_{z}.
\end{eqnarray*}
Recall that $g^{\delta}$ and $g^{e}$ coincide on $M\setminus R^{\delta}$. Let $K^{\delta}$ be the set $K_{\delta}=\{[x,y]:-r(T^{-1}x)+\delta<y<r(x)-(1+\delta)\}$. Then, for $\beta(z)$ defined by
\begin{gather}\label{betadez}
\beta(z)=(2+2|r'(x)|)\max\left\{(2+2|r'(x)|),(2+2|r'(Tx)|)\right\}\textbf{1}_{M\setminus K_{\delta}}(z)+\textbf{1}_{K_{\delta}}(z)
\end{gather}
we conclude the proof of Proposition \ref{OselHyp}.
\end{proof}

Remark that, for $z=[x,y]\in M^{\ast}\cap \phi^{-1}(M^{\ast})$, the differential $d_{z}\phi$ is represented in local coordinates by the identity matrix if $y+1<r(x)$. If $y+1>r(x)$, with $x\in I\setminus \{x_{i}\}$, the differential $d_{z}\phi$ is represented in local coordinates by the matrix
$$
d_{z}\phi= \left(
\begin{array}{cc}
 1 & 0 \\
-2r'(x) & 1
\end{array}
\right).
$$

\subsection{A finite invariant measure} There is a natural $(\phi^{t})$-invariant measure $\tilde{\mu}$ on $M$ defined as follows. For a Borel set $A\subset M$, define $\tilde{\mu}(A)$ as
$$\tilde{\mu}(A)=\int_{0}^{1}\int_{-r(T^{-1}x)}^{r(x)} \textbf{1}_{\pi^{-1}(A)}(x,y)dydx.$$
Since the Lebesgue measure $m$ is invariant by translation and $\phi^{t}$ acts by translations over the vertical lines, the measure $\tilde{\mu}$ is $(\phi^{t})$-invariant.

\begin{prop}\label{mufinite} The measure $\tilde{\mu}$ is finite.
\end{prop}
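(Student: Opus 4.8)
The plan is to show that the total mass $\tilde\mu(M)$ is finite by reducing it to the Lebesgue integral of $r$ over $(0,1)$ and then bounding that integral by a convergent series. First I would take $A=M$ in the definition of $\tilde\mu$: the indicator $\mathbf 1_{\pi^{-1}(M)}$ is then identically $1$ on the fundamental domain, so
\begin{equation*}
\tilde\mu(M)=\int_0^1\int_{-r(T^{-1}x)}^{r(x)}dy\,dx=\int_0^1\big(r(x)+r(T^{-1}x)\big)\,dx.
\end{equation*}
Since $T$ preserves $m$ (it is piecewise a translation), so does $T^{-1}$, whence $\int_0^1 r(T^{-1}x)\,dx=\int_0^1 r(x)\,dx$; thus $\tilde\mu(M)=2\int_0^1 r\,dx$ and it suffices to prove that $r$ is Lebesgue-integrable on $(0,1)$.

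Next, because $S=\{x_i\}$ has Lebesgue measure zero, I would write $\int_0^1 r=\sum_{i\ge 0}\int_{I_i} r_i$ and estimate each term separately. On the central subinterval $I_{i,3}$ one has $\alpha_i\equiv 0$, so $r_i\equiv 1$ and its contribution is exactly $l_i-2b_i$. On the two lateral collars $I_{i,1}\cup I_{i,2}=(x_i,x_i+b_i)$ and $I_{i,4}\cup I_{i,5}=(x_{i+1}-b_i,x_{i+1})$, of total length $2b_i$, the inequalities $0\le\alpha_i\le 1$ and $f_i\ge 1$ give $r_i=\alpha_i f_i+(1-\alpha_i)\le f_i$, so the collar contribution is at most $\int f_i$ over the set $\{x-x_i<b_i\}\cup\{x_{i+1}-x<b_i\}$.

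The key point, and the only place where anything could go wrong, is that the blow-up of $r$ at each endpoint $x_i$ is merely \emph{logarithmic} and hence integrable. Concretely, by the symmetry of $f_i$ and the substitution $u=(x-x_i)/b_i$,
\begin{equation*}
\int_{\{x-x_i<b_i\}\cup\{x_{i+1}-x<b_i\}}f_i\,dx=2\int_0^{b_i}\Big(1-\log\tfrac{x-x_i}{b_i}\Big)dx=2b_i\int_0^1(1-\log u)\,du=4b_i,
\end{equation*}
using $\int_0^1(1-\log u)\,du=2$. Hence $\int_{I_i}r_i\le (l_i-2b_i)+4b_i=l_i+2b_i$.

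Finally I would sum over $i$. Since $0<b_i<l_i/2$ and $\sum_{i\ge 0}l_i=m(0,1)=1$, the series $\sum b_i$ converges and
\begin{equation*}
\int_0^1 r\,dx=\sum_{i\ge 0}\int_{I_i}r_i\le\sum_{i\ge 0}(l_i+2b_i)\le 2\sum_{i\ge 0}l_i=2<\infty,
\end{equation*}
so $\tilde\mu(M)=2\int_0^1 r\,dx\le 4<\infty$, which proves the proposition. The main obstacle is therefore conceptual rather than computational: one must recognize that the countably many singularities of $r$, although they accumulate at $1$, are individually integrable because they are logarithmic, and that their contributions are controlled by the summable weights $b_i$, whose convergence is forced by $b_i<l_i/2$ together with $\sum l_i=1$.
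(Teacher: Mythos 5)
Your proof is correct and follows essentially the same route as the paper: reduce finiteness of $\tilde\mu$ to that of $\int_0^1 r\,dm$, then bound $r$ interval by interval using the integrability of the logarithmic singularity, and sum via $b_i<l_i/2$ and $\sum_i l_i=1$. The only differences are cosmetic: you spell out the reduction $\tilde\mu(M)=2\int_0^1 r\,dm$ (via $T$-invariance of Lebesgue measure), which the paper leaves implicit, and you use the slightly sharper convexity bound $r_i\le f_i$ where the paper uses $r_i\le f_i+1$, yielding the constant $4$ in place of the paper's $5$.
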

\begin{proof} It is enough to prove that $\int_{I}r(x)dm$ is finite. Recall that on $I_{i,1}\cup I_{i,2}$ we have $r(x)\leq 2-\log((x-x_{i})/b_{i})$, whereas on $I_{i,4}\cup I_{i,5}$ we have $r(x)\leq 2-\log((x_{i+1}-x)/b_{i})$. Since $r|_{I_{i,3}}\equiv 1$, we obtain
\begin{eqnarray*}
\int_{0}^{1}\int_{0}^{r(x)}1 dydx &=& \sum_{i\in\N}\int_{I_{i}}r(x) dx\\
&\leq& \sum_{i\in\N}\left[2\left(\int_{0}^{b_{i}}2-\log(x/b_{i}) dx\right)\right]+l_{i}-2b_{i}\\
&=& \sum_{i\in\N} 4b_{i}+l_{i}\\
&\leq& 5.
\end{eqnarray*}
\end{proof}
\begin{definition} We define the $\phi$-invariant probability measure $\mu$ on $M$ as the normalized measure of $\tilde{\mu}$.
\end{definition}

\section{Proof of Theorem \ref{T1}}

In this section we will consider an arbitrary aperiodic countable interval exchange transformation $(I,m,T)$ of entropy $h\in (0,\infty]$. We consider the Riemannian manifold $(M,g^{\delta})$ constructed in section 3, the $C^{\infty}$-map $\phi=\phi^{1}$ defined as the time-one map of the suspension flow and the $\phi$-invariant probability measure $\mu$ as in the previous section. We also consider the Riemannian metric $g^{e}$ defined on the subset $M^{\ast}$ of $M$.

\subsection{Some technical lemmas} The aim of this subsection is to prove that under some additional condition on the roof function $r$, easy to ensure, the assumptions of Oseledec's Theorem for $d\phi$ and $d\phi^{-1}$ holds.

\begin{lemma}\label{lemma0} Define the function $h$ by
$$h(x)= \begin{cases}
2+2|r'(x)| & \text{if }x\in  I\setminus S\\
0 & \text{otherwise.}
\end{cases}
$$
If $-\sum_{i\geq 0}b_{i}\log b_{i}<\infty$, then $\log^{+}(h)$ is $m$-integrable on I.
\end{lemma}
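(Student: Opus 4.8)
The plan is to estimate $\int_I\log^{+}(h)\,dm$ directly by splitting $I$ into the subintervals $I_{i,1},\dots,I_{i,5}$ on which $r=r_i$ has a prescribed form, and to show that each of the five families of contributions sums to a finite quantity controlled by $\sum_i b_i$ and $-\sum_i b_i\log b_i$. Since $|r'|\geq 0$, we have $h\geq 2$ on $I\setminus S$, so $\log^{+}(h)=\log(h)$ there, while $S$ is $m$-negligible; hence it suffices to bound $\sum_i\sum_{k=1}^{5}\int_{I_{i,k}}\log(2+2|r'|)\,dm$. The finitely or countably many division points between the subintervals are $m$-negligible, so they play no role.

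First I would treat the three pieces where $r$ is given by an explicit formula. On $I_{i,3}$ we have $r\equiv 1$, so $r'=0$, $h=2$, and the contribution is $(l_i-2b_i)\log 2\leq l_i\log 2$; summing gives $\log 2$ since $\sum_i l_i=1$. On $I_{i,1}=(x_i,x_i+b_i/2)$ the cutoff $\alpha_i\equiv 1$, so $r=f_i$ and $|r'(x)|=1/(x-x_i)$. Writing $u=x-x_i\in(0,b_i/2)$ and using $2+2/u\leq 4/u$ for $u<1/2$, I get
$$\int_{I_{i,1}}\log(2+2|r'|)\,dm\leq \int_{0}^{b_i/2}\log(4/u)\,du=\tfrac{b_i}{2}\bigl(1+\log 4+\log 2\bigr)-\tfrac{b_i}{2}\log b_i,$$
after applying the elementary primitive $\int_{0}^{c}(-\log u)\,du=c-c\log c$. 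The interval $I_{i,5}$ is symmetric and gives the same bound, so these pieces sum to a constant multiple of $\sum_i b_i$ plus $-\tfrac12\sum_i b_i\log b_i$, both finite.

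It remains to control the transition pieces $I_{i,2}$ and $I_{i,4}$, which I expect to be the main technical point, since there $r$ is not given by a single clean formula and $r'$ could a priori blow up worse than logarithmically. On $I_{i,2}$ one has $r_i=\alpha_i f_i+(1-\alpha_i)$, so by the product rule $r_i'=\alpha_i'(f_i-1)+\alpha_i f_i'$. As $\alpha_i=\alpha\circ\gamma_{i,2}$ with $\gamma_{i,2}'=2/b_i$ and $\alpha$ a fixed smooth function, $|\alpha_i'|\leq 2A/b_i$ with $A=\sup|\alpha'|$; moreover on $I_{i,2}$ one has $(x-x_i)/b_i\in[1/2,1)$, giving $|f_i-1|=|\log((x-x_i)/b_i)|\leq\log 2$ and $|f_i'|=1/(x-x_i)\leq 2/b_i$. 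Combining these yields $|r_i'|\leq C/b_i$ for the absolute constant $C=2(A\log 2+1)$, so on $I_{i,2}$ (of length $b_i/2$, and assuming $b_i<1$) we have $h\leq(2+2C)/b_i$ and therefore
$$\int_{I_{i,2}}\log^{+}(h)\,dm\leq \tfrac{b_i}{2}\log(2+2C)-\tfrac{b_i}{2}\log b_i,$$
with the same bound for $I_{i,4}$ by symmetry.

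Summing all five families, I obtain
$$\int_{I}\log^{+}(h)\,dm\leq \log 2+K\sum_i b_i-2\sum_i b_i\log b_i$$
for an explicit constant $K$. Since $b_i<l_i/2$ forces $\sum_i b_i<1/2$, and the hypothesis gives $-\sum_i b_i\log b_i<\infty$, the right-hand side is finite, proving the lemma. The only genuinely delicate step is the bound $|r_i'|\leq C/b_i$ on the transition intervals; everything else reduces to the primitive $\int_{0}^{c}(-\log u)\,du=c-c\log c$, which is exactly what makes the assumption $-\sum_i b_i\log b_i<\infty$ the natural one.
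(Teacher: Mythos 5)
Your proof is correct and follows essentially the same route as the paper: decompose each $I_i$ into the five subintervals, bound $|r'|$ by $1/(x-x_i)$ (resp.\ $1/(x_{i+1}-x)$) on $I_{i,1}$, $I_{i,5}$ and by $C/b_i$ on the transition pieces via the product rule, then integrate to get terms controlled by $\sum_i b_i$ and $-\sum_i b_i\log b_i$. Your explicit derivative $|r'(x)|=1/(x-x_i)$ on $I_{i,1}$ in fact corrects what appears to be a typo in the paper's displayed bound ($b_i/(x-x_i)$ there), without affecting the conclusion.
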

\begin{proof}
It is sufficient to prove that $x\mapsto \log(1+|r'(x)|)$ is $m$-integrable on $I$ since
$$h(x)\leq \log(2)+\log(1+|r'(x)|).$$
Recall that, on the interval $I_{i}$, the roof function is defined by $r(x)=\alpha_{i}(x)f_{i}(x)+(1-\alpha_{i}(x))$. It follows that $|r'(x)|\leq |\alpha'_{i}(x)||f_{i}(x)-1|+|f'_{i}(x)|$. Thus, we have
$$|r'(x)|\leq \begin{cases}
b_{i}/(x-x_{i}) & \text{if }x\in  I_{i,1}\\
C/b_{i}  & \text{if }x\in I_{i,2}\\
0 & \text{if }x\in I_{i,3}\\
C/b_{i} & \text{if }x\in I_{i,4} \\
b_{i}/(x_{i+1}-x) & \text{if }x\in I_{i,5},
\end{cases}
$$
where $C\geq 1$ is a constant depending on $\sup_{x\in\R}|\alpha'(x)|<\infty$. In particular, we obtain
\begin{eqnarray*}
\int_{0}^{1}\log^{+} (1+|r'(x)|)dx &=& \sum_{i=0}^{\infty}\sum_{k=1}^{5}\int_{I_{i,k}}\log(1+|r'(x)|)dx\\
&\leq&  \sum_{i=0}^{\infty} \Bigg( \int_{I_{i,1}}\log \left(1+\frac{b_{i}}{x-x_{i}}\right)dx\\
& &+\int_{I_{i,2}}\log \left(1+\frac{C}{b_{i}}\right)dx+\int_{I_{i,3}}\log(1)dx\\
& &+\int_{I_{i,4}}\log \left(1+\frac{C}{b_{i}}\right)dx+\int_{I_{i,5}}\log \left(1+\frac{b_{i}}{x_{i+1}-x}\right)dx\Bigg) \\
&\leq& \sum_{i=0}^{\infty}(3+\log(2C))l_{i}-b_{i}\log(b_{i})\\
&\leq& 3+\log(2C)-\sum_{i=0}^{\infty}b_{i}\log(b_{i})\\
&<&+\infty.
\end{eqnarray*}
\end{proof}
The above proof shows that the assumption $-\sum_{i\geq 0}b_{i}\log b_{i}<\infty$ is crucial. In particular, if the interval exchange transformation has positive entropy $h>0$, it follows from Proposition \ref{entropiesubintervals} that $-\sum_{i=0}^{\infty}l_{i}\log l_{i}=\infty$, so that we cannot choose $b_{i}$ uniformly proportional to $l_{i}$ if we want $-\sum_{i\geq 0}b_{i}\log b_{i}<\infty$.

\begin{lemma}\label{lemma1} If $-\sum_{i\geq 0}b_{i}\log b_{i}<\infty$, then
$$\int \log^{+}\|d\phi\|^{e}d\mu<\infty \quad\mbox{and}\quad \int \log^{+}\|d\phi^{-1}\|^{e}d\mu<\infty.$$
\end{lemma}
\begin{proof}
Notice that the set $M^{\ast}\cap \phi^{-1}(M^{\ast})$ is a set of full $\mu$-measure, so that we can suppose that $z\in M^{\ast}\cap \phi^{-1}(M^{\ast})$. In local coordinates the differential application $d_{z}f$ is represented by the identity if $z\in \{[x,y]\in M:-r(T^{-1}x)< y < r(x)-1\}$ and the matrix
$$
\left(
\begin{array}{cc}
 1 & 0 \\
-2r'(x) & 1
\end{array}
\right),
$$
if $z\in \{[x,y]:r(x)-1<y<r(x)\}$. In particular, we have $\|d_{z}\phi\|^{e}\leq 2+2|r'(x)|$, so that we obtain
\begin{eqnarray*}
\int\log^{+}\|d\phi\|^{e}d\tilde{\mu}&=& \int_{0}^{1}\int_{r(x)-1}^{r(x)}\log^{+}\|d_{z}\phi\|^{e}dydx\\
&\leq& \int_{0}^{1}\int_{r(x)-1}^{r(x)} \log^{+} (2+2|r'(x)|)dydx\\
&=& \int \log^{+}(h) dm \\
&<& +\infty.
\end{eqnarray*}
Lemma \ref{lemma0} implies that the last integral is finite. The proof of the finiteness for the second integral is similar.
\end{proof}

The conclusion of Lemma \ref{lemma1} involves the norm $\|d\phi\|^{e}$, but we need to work with $\|d\phi\|^{\delta}$.

\begin{lemma}\label{lemma2} If $-\sum_{i\geq 0}b_{i}\log b_{i}<\infty$, then
$$\int \log^{+}\|d\phi\|^{\delta}d\mu<\infty \quad\mbox{and}\quad \int \log^{+}\|d\phi^{-1}\|^{\delta}d\mu<\infty.$$
\end{lemma}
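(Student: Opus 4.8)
The plan is to reduce the $\delta$-norm estimates to the Euclidean ones of Lemma \ref{lemma1} by absorbing the metric distortion through Proposition \ref{OselHyp}. Throughout we may restrict to the full $\mu$-measure set $M^{\ast}\cap\phi^{-1}(M^{\ast})$. Starting from the pointwise inequality (\ref{InOp}), applying $\log^{+}$ and using the elementary subadditivity $\log^{+}(ab)\leq\log^{+}(a)+\log^{+}(b)$ valid for $a,b>0$, one gets
$$\log^{+}\|d_{z}\phi\|^{\delta}\leq\log^{+}\beta(z)+\log^{+}\|d_{z}\phi\|^{e}.$$
Integrating against $\mu$ and invoking Lemma \ref{lemma1} for the second term, the whole problem reduces to proving $\int\log^{+}\beta\,d\mu<\infty$.

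Next I would estimate $\log^{+}\beta$ via the explicit formula (\ref{betadez}). On $K_{\delta}$ the factor $\beta$ equals $1$, so $\log^{+}\beta\equiv 0$ there and only the region $M\setminus K_{\delta}$ contributes. On $M\setminus K_{\delta}$, combining $\log^{+}(\max\{a,b\})\leq\log^{+}a+\log^{+}b$ with subadditivity yields, for $z=[x,y]$,
$$\log^{+}\beta(z)\leq 2\log^{+}h(x)+\log^{+}h(Tx),$$
where $h$ is the function of Lemma \ref{lemma0}.

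It then remains to integrate this bound against $\tilde{\mu}$ (hence against $\mu$). The key observation is that within each vertical fibre the set $M\setminus K_{\delta}$ has Lebesgue measure exactly $1+2\delta$, independent of $x$, since $K_{\delta}$ is obtained from the full fibre $(-r(T^{-1}x),r(x))$ by removing an interval of length $1+2\delta$ (using $r\geq 1$ and $0<\delta<1/2$ to ensure $K_\delta$ is properly contained). Consequently, for any function depending only on the base coordinate,
$$\int_{M\setminus K_{\delta}} g(x)\,d\tilde{\mu}=(1+2\delta)\int_{0}^{1}g(x)\,dx.$$
Applied to $g=2\log^{+}h$ this gives a finite contribution by Lemma \ref{lemma0}; for the cross term $\log^{+}h(Tx)$ I would use that $T$ preserves the Lebesgue measure $m$, so that $\int_{0}^{1}\log^{+}h(Tx)\,dx=\int_{0}^{1}\log^{+}h(x)\,dx<\infty$, again by Lemma \ref{lemma0}. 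This proves $\int\log^{+}\beta\,d\mu<\infty$ and hence the finiteness of the first integral. For $\|d\phi^{-1}\|^{\delta}$ one argues symmetrically: the comparison of Lemma \ref{compmetrics} gives $\|d_{z}\phi^{-1}\|^{\delta}\leq C(\phi^{-1}z)C(z)\|d_{z}\phi^{-1}\|^{e}$, producing a distortion factor of the same type, and the same combination of Lemma \ref{lemma1} (second statement), Lemma \ref{lemma0} and the invariance of $m$ under $T$ closes the argument.

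The main obstacle is the cross term $\log^{+}h(Tx)$: unlike $\log^{+}h(x)$, it is not covered directly by Lemma \ref{lemma0}, and controlling it relies essentially on the fact that the distortion region $M\setminus K_{\delta}$ carries uniformly bounded fibre-mass, together with the measure-preservation of the interval exchange $T$.
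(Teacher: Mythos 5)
Your proposal is correct and follows essentially the same route as the paper: reduce to the Euclidean estimate of Lemma \ref{lemma1} via Proposition \ref{OselHyp}, bound $\log^{+}\beta$ by $2\log^{+}h(x)+\log^{+}h(Tx)$ on the complement of $K_{\delta}$, use that this region has fibre-measure $1+2\delta$, and dispose of the cross term $\log^{+}h(Tx)$ by the $T$-invariance of Lebesgue measure together with Lemma \ref{lemma0}. This is exactly the paper's computation of $\int\log^{+}\beta\,d\tilde{\mu}\leq 3(1+2\delta)\int\log^{+}(h)\,dm$, so no further comparison is needed.
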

\begin{proof}
From Inequality $(\ref{InOp})$, we obtain
\begin{eqnarray*}
\int\log^{+}\|d_{z}\phi\|^{\delta}d\mu(z)&\leq& \int\log^{+}(\beta(z)\|d_{z}\phi\|^{e})d\mu(z)\\
&\leq& \int\log^{+}\|d_{z}\phi\|^{e}d\mu(z) + \int\log^{+}\beta(z) d\mu(z).
\end{eqnarray*}
The first integral is finite thanks to Lemma \ref{lemma1}. For the second integral, we have
\begin{eqnarray*}
\int\log^{+}\beta d\tilde{\mu}&=&\int_{0}^{1}\int_{-r(T^{-1}x)}^{-r(T^{-1}x)+\delta}\log^{+}\beta([x,y])dydx\\
& &+ \int_{0}^{1}\int_{r(x)-(1+\delta)}^{r(x)}\log^{+}\beta([x,y])dydx\\
&\leq& (1+2\delta)\int_{0}^{1} 2\log(2+2|r'(x)|)+\log(2+2|r'(Tx)|)dx\\
&=& 3(1+2\delta)\int \log^{+}(h) dm.
\end{eqnarray*}
The last equality follows from the fact that $T$ preserves the Lebesgue measure $dx$ on $(0,1)$. Lemma \ref{lemma0} allows to conclude the proof of Lemma \ref{lemma2} for $\|d\phi\|^{\delta}$. The proof of the finiteness for the second integral is similar.
\end{proof}

\subsection{The case $h=\infty$}

Our initial goal is to find a counterexample to Ruelle's inequality for a diffeomorphism of a noncompact manifold. Suppose that the measure-theoretic entropy of $T$ with respect to $m$ is infinite. Since the roof function $r$ is $m$-integrable, it follows from Abramov's formula that the entropy of the suspension flow $h_{\mu}(\phi)$ is infinite. Choose the $b_{i}$'s such that $-\sum_{i\geq 0}b_{i}\log b_{i}<\infty$. Lemma \ref{lemma2} above ensures that Oseledec's Theorem applies, so that there exists almost-everywhere Lyapunov exponents satisfying
$$\int\chi^{+}d\mu <\infty.$$
In particular, the measure-theoretic entropy of $\phi$ with respect to $\mu$ is greater than the sum of the positive Lyapunov exponents. This contradicts Ruelle's inequality.

\subsection{Computation of Lyapunov exponents} In order to prove that the Lyapunov exponents for $g^{\delta}$ are $\mu$-a.e. equal to zero, we will first calculate the Lyapunov exponents for the Riemannian metric $g^{e}$ on $M^{\ast}$. As $M^{\ast}$ is not a $\phi$-invariant set, we actually have to work on the set $\bigcap_{k\in\Z}\phi^{k}(M^{\ast})$, which is a set of full $\mu$-measure. The key to compute the Lyapunov exponents for $g^{e}$ will be Lemma \ref{lemmaAaronson} ((\cite[Proposition 2.3.1]{MR1450400})) below. This lemma needs the ergodicity of the map $\phi$, which is not necessarily satisfied. In fact, since $T$ is ergodic, the flow $(\phi^{t})$ is obviously ergodic. This does not imply that all maps $\phi^{t}$, for fixed $t$, are ergodic. But there always exist many infinitely $t\in \R$ such that $\phi^{t}$ is ergodic (see \cite[Theorem 3.2]{MR536988}).

Let $\tau$ be an ergodic time for the flow $(\phi^{t})$. The definition of Lyapunov exponents implies that every Lyapunov exponent for the map $\phi^{\tau}$ is of the form $\tau\lambda_{i}$, where $\lambda_{i}$ is a Lyapunov exponent for $\phi=\phi^{1}$. In particular, if all Lyapunov exponents of $\phi^{\tau}$ are equal to zero, the same holds for the Lyapunov exponents of $\phi$. Thus, we can suppose without loss of generality that $\phi$ is an ergodic map. We will show that its associated Lyapunov exponents are equal to zero.

\begin{prop}[Aaronson]\label{lemmaAaronson} Let $(X,\mathcal{B},m)$ a Lebesgue space where $m$ is a probability measure. Suppose that $T:X\vers X$ is an ergodic transformation preserving $m$. If $h:X\vers\R$ is a measurable function such that $\int \log^{+}(|h|)dm<\infty$, we have
$$\lim_{n\vers\infty} \frac{1}{n}\log^{+} \left|\sum_{i=0}^{n-1}h(T^{i}x) \right|=0$$
for $m$-almost every $x\in X$.
\end{prop}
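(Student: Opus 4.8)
The plan is to prove only the upper bound, namely that
$$\limsup_{n\vers\infty}\frac{1}{n}\log^{+}\left|\sum_{i=0}^{n-1}h(T^{i}x)\right|\leq 0 \quad\text{for }m\text{-a.e. }x;$$
since $\log^{+}$ is nonnegative the reverse inequality is automatic, and the two together give the stated limit. The entire argument reduces to understanding the growth of a \emph{single} term along the orbit, i.e. to showing that $\frac{1}{n}\log^{+}|h(T^{n}x)|\vers 0$ for $m$-almost every $x$.

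First I would convert the integrability hypothesis into a summable tail estimate. Writing $g=\log^{+}|h|\geq 0$, the layer-cake (Markov-type) inequality gives, for every $\varepsilon>0$,
$$\int g\,dm=\int_{0}^{\infty}m(g>t)\,dt\geq \varepsilon\sum_{n\geq 1}m(g>\varepsilon n),$$
so $\sum_{n}m(g>\varepsilon n)<\infty$. Because $T$ preserves $m$ we have $m(\{x:g(T^{n}x)>\varepsilon n\})=m(\{x:g>\varepsilon n\})$, and the Borel--Cantelli lemma then yields that for $m$-a.e. $x$ one has $g(T^{n}x)\leq \varepsilon n$ for all large $n$, that is $\limsup_{n}\frac{1}{n}g(T^{n}x)\leq\varepsilon$. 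Letting $\varepsilon\vers 0$ along a countable sequence gives $\frac{1}{n}\log^{+}|h(T^{n}x)|\vers 0$ a.e.

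Next I would pass from the single term to the partial sum by the triangle inequality: $\left|\sum_{i=0}^{n-1}h(T^{i}x)\right|\leq n\max_{0\leq i<n}|h(T^{i}x)|$, hence
$$\frac{1}{n}\log^{+}\left|\sum_{i=0}^{n-1}h(T^{i}x)\right|\leq \frac{\log n}{n}+\frac{1}{n}\max_{0\leq i<n}g(T^{i}x).$$
The first summand tends to $0$. For the second, what remains is a deterministic lemma: if a nonnegative sequence $a_{i}$ satisfies $a_{i}/i\vers 0$, then $\frac{1}{n}\max_{0\leq i<n}a_{i}\vers 0$. Indeed, given $\varepsilon>0$ choose $N$ with $a_{i}\leq \varepsilon i$ for $i\geq N$; for $n>N$ the maximum is at most $\max\{\max_{i<N}a_{i},\varepsilon(n-1)\}$, so $\frac{1}{n}\max_{0\leq i<n}a_{i}\leq \frac{C}{n}+\varepsilon$ and thus $\limsup\leq\varepsilon$. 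Applying this with $a_{i}=g(T^{i}x)$ completes the upper bound.

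The argument is short and, somewhat notably, does not actually use the ergodicity of $T$: only its measure-preservation enters, through the Borel--Cantelli step. The one genuine point to handle with care is the passage $a_{i}/i\vers 0\Rightarrow \frac{1}{n}\max_{0\leq i<n}a_{i}\vers 0$, which is exactly where the almost-sure control of individual orbit values is upgraded to control of the whole partial sum; everything else is the routine layer-cake/Borel--Cantelli estimate.
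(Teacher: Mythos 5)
Your proof is correct. One thing to note up front: the paper itself contains no proof of this proposition --- it is quoted as Proposition 2.3.1 of Aaronson's book --- so the comparison is with the standard argument there rather than with anything in the text. That standard argument gets the key step, $\frac{1}{n}\log^{+}|h(T^{n}x)|\to 0$ for $m$-a.e.\ $x$, from the Birkhoff ergodic theorem applied to $\log^{+}|h|$ (the difference of two consecutive Birkhoff averages of an integrable function tends to $0$ a.e.), and then concludes exactly as you do, via $\left|\sum_{i=0}^{n-1}h(T^{i}x)\right|\leq n\max_{0\leq i<n}|h(T^{i}x)|$ together with monotonicity and subadditivity of $\log^{+}$ over products, which give $\log^{+}\left|\sum_{i=0}^{n-1}h(T^{i}x)\right|\leq \log n+\max_{0\leq i<n}\log^{+}|h(T^{i}x)|$. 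You replace the Birkhoff input by a layer-cake plus Borel--Cantelli estimate: $\sum_{n}m(\log^{+}|h|>\varepsilon n)<\infty$, invariance of $m$ under $T^{-n}$, and a countable sequence $\varepsilon\downarrow 0$. This is more elementary and makes transparent your closing observation that only measure preservation is used, so the ergodicity hypothesis is superfluous (the Birkhoff route also survives without ergodicity, since the pointwise ergodic theorem holds for non-ergodic systems with the conditional expectation as limit, but your argument avoids invoking it entirely). The two points that needed care --- the $\log^{+}$ manipulation above, and the deterministic lemma upgrading $a_{i}/i\to 0$ to $\frac{1}{n}\max_{0\leq i<n}a_{i}\to 0$ --- are both handled correctly, so the write-up is complete as it stands.
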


\noindent
\textit{Proof of Theorem \ref{T1}.} Choose for all $i\geq 0$ a constant $0<b_{i}<l_{i}/2$, so that $-\sum_{i\geq 0}b_{i}\log b_{i}<\infty$. Let $z=[x,y]\in \bigcap_{k\in\Z}\phi^{k}(M^{\ast})$. For all $n\geq 0$, let $k(n)$ be the positive integer such that $\phi^{n}(z)=[T^{k(n)}x,y']$ for some $y'\in (-r(T^{k(n)-1}x),r(T^{k(n)}x))$. Hence, we have
\begin{eqnarray*}
\|d_{z}\phi^{n}\|^{e}&\leq& 2+2\sum_{i=0}^{k(n)}|r'(T^{i}x)|\\
&\leq& \sum_{i=0}^{k(n)}(2+2|r'(T^{i}x)|)\leq \sum_{i=0}^{n-1}h(T^{i}x).
\end{eqnarray*}
From Proposition \ref{lemmaAaronson}, we obtain for $\mu$-almost every $z\in M$
\begin{eqnarray*}
\lim_{n\vers\infty}\frac{1}{n}\log^{+} \|d_{z}\phi^{n}\|^{e}&\leq& \lim_{n\vers\infty}\frac{1}{n}\log^{+}\left(\sum_{i=0}^{n-1}h(T^{i}x)\right)\\
&=& 0.
\end{eqnarray*}
In particular, the positive Lyapunov exponents for $\phi$, with respect to $g^{e}$, are $\mu$-a.e. equal to zero since for all $v\in T_{z}M$ we have
\begin{eqnarray*}
\lim_{n\vers\infty}\frac{1}{n}\log\|d_{z}\phi^{n}(v)\|&\leq& \lim_{n\vers\infty}\frac{1}{n}\log(\|d_{z}\phi^{n}\|^{e}\|v\|^{e})\\
&=&\lim_{n\vers\infty}\frac{1}{n}\log \|d_{z}\phi^{n}\|^{e} = 0.
\end{eqnarray*}
The same argument for $\phi^{-1}$ implies that the negative Lyapunov exponents for $\phi$, with respect to $g^{e}$, are $\mu$-a.e. equal to zero.

Let $\lambda^{\delta}(z,v)$ be the Lyapunov exponent for $z\in M$ in the direction of $v\in T_{z}M$ with respect to the metric $g^{\delta}$. If $z=[x,y]\in \bigcap_{k\in\Z}\phi^{k}(M^{\ast})$, we have
\begin{eqnarray*}
\lambda^{\delta}(z,v)&=&\lim_{n\vers\infty}\frac{1}{n}\log \|d_{z}\phi^{n}(v)\|^{\delta}_{\phi^{n}z}\\
&\leq& \lim_{n\vers\infty}\frac{1}{n}\log\|d_{z}\phi^{n}\|^{\delta}\|v\|^{\delta}_{z}\\
&=& \lim_{n\vers\infty}\frac{1}{n}\log\|d_{z}\phi^{n}\|^{\delta}\\
&\leq& \lim_{n\vers\infty}\frac{1}{n}\log(2+2|r'(x)|)\\
& &+ \lim_{n\vers\infty}\frac{1}{n}\log \max\{(2+2|r'(x)|),(2+2|r'(T^{k(n)}x)|)\}\\
& &+ \lim_{n\vers\infty}\frac{1}{n}\log \|d_{z}\phi^{n}\|^{e}\\
&=&  \lim_{n\vers\infty}\frac{1}{n}\log h(T^{k(n)}x).
\end{eqnarray*}
Since the function $\log^{+}h$ is $m$-integrable, Birkhoff Ergodic Theorem implies
\begin{eqnarray*}
0\leq \lim_{n\vers\infty}\frac{1}{n}\log h(T^{k(n)}x)&=& \lim_{n\vers\infty}\frac{k(n)}{n}\frac{1}{k(n)}\log h(T^{k(n)}x)\\
&\leq& \lim_{n\vers\infty}\frac{1}{k(n)}\log (h(T^{k(n)}x)\\
&=& 0,
\end{eqnarray*}
for $m$-almost every $x\in I$. Hence, the positive Lyapunov exponents for $\phi$, with respect to $g^{\delta}$ are $\mu$-a.e. are equal to zero. The same argument for $\phi^{-1}$ implies this fact for the negative Lyapunov exponents.\\

From Abramov's formula we have
$$h_{\mu}(\phi)=\frac{h}{2\int r dm}.$$
Using the fact that $h_{\mu}(\phi^{s})=|s|h_{\mu}(\phi)$ for all $s\in \R$, we have
$$h_{\mu}(f)=h,$$
for $f=\phi^{2\int r dm}$. Since the Lyapunov exponents for $f$ are equal to zero $\mu$-a.e., this concludes the proof of Theorem \ref{T1}.
\begin{flushright}
$\square$
\end{flushright}

\addcontentsline{toc}{section}{References}
\bibliography{biblio}
\bibliographystyle{amsalpha}


\end{document}